\newcolumntype{C}{>{$}c<{$}}
\newcommand{\Graph}{\text{Graph}}
\newcommand{\smallO}[1]{\ensuremath{\mathop{}\mathopen{}o\mathopen{}\left(#1\right)}}
\newcommand{\func}[1]{\mathcal{C}^{\infty}(#1)}
\newcommand{\groupoid}{\mathcal{G} \rightrightarrows M}
\newtheorem{theorem}{Theorem}[section]
\newtheorem{definition}[theorem]{Definition}
\newtheorem*{definition*}{Definition}
\newtheorem{proposition}[theorem]{Proposition}
\newtheorem{lemma}[theorem]{Lemma}
\newtheorem{remark}[theorem]{Remark}
\newtheorem*{remark*}{Remark}
\newtheorem*{remarks*}{Remarks}
\newtheorem{corollary}[theorem]{Corollary}
\newtheorem*{notation*}{Notation}
\newtheorem{ex}[theorem]{Example}
\newtheorem*{ex*}{Example}
\newtheorem*{exs*}{Examples}
\newtheorem*{app*}{Application}
\newtheorem{conjecture*}{Conjecture}
\def\ts{\thinspace}
\title{
Butcher series for Hamiltonian Poisson integrators through symplectic groupoids
}
\author{
Adrien Busnot Laurent\textsuperscript{1} and Oscar Cosserat\textsuperscript{2}
}
\date{\vspace{-1cm}}
\begin{document}
\footnotetext[1]{
Univ Rennes, INRIA (Research team MINGuS), IRMAR (CNRS UMR 6625) and ENS Rennes,
France.
Adrien.Busnot-Laurent@inria.fr.}
\footnotetext[2]{
Göttingen Mathematisches Institut,
Georg-August-Universität Göttingen,
Office 021, Hauptgebaüde,
Bunsenstraße 3-5,
37073 Göttingen - Germany.
Oscar.Cosserat@mathematik.uni-geottingen.de
}

\maketitle

\begin{abstract}

We exhibit a new pre-Lie algebra in the framework of symplectic groupoids and, in turn, introduce a pre-Lie formalism of Butcher trees for the approximation of Hamilton-Jacobi solutions on any symplectic groupoid $\groupoid.$ The impact of this new algebraic approach is twofold. On the geometric side, it yields algebraic operations to approximate Lagrangian bisections of $\GG$ using the Butcher-Connes-Kreimer Hopf algebra and, in turn, aims at a better understanding of the group of Hamiltonian diffeomorphisms of $M.$ On the computational side, we define a new class of Poisson integrators for Hamiltonian dynamics on Poisson manifolds.

\smallskip

\noindent
{\it Keywords:\,} Butcher series, Poisson geometry, mathematical physics, geometric numerical integration, symplectic groupoids, Hamilton-Jacobi equation, pre-Lie algebra, Hopf algebra.

\smallskip

\noindent
{\it AMS subject classification (2020):\,} 16T05, 37J39, 41A58, 65L06, 70G45.
\end{abstract}

%\tableofcontents

\section{Introduction}

\subsection{Context}

A Poisson bracket on a smooth manifold $M$ equips the space of smooth functions of this manifold $\func{M}$ with a Lie algebra structure $(\func{M}, \{.,.\})$. Therefore, it is natural to ask about the existence of a Lie group integrating it. In the context of Poisson manifolds, there exists an extremely profitable approach to this question: instead of looking for an infinite-dimensional Lie group, one constructs a finite-dimensional Lie groupoid $\mathcal{G}$ over $M$. This Lie groupoid turns out to have a natural symplectic structure. Therefore, symplectic Lie groupoids are the global counterpart to Poisson structures. They encode in particular three different aspects of Poisson geometry: foliation theory (the partition of any Poisson manifold into leaves), symplectic geometry (the geometry along any leaf) and Lie theory. Concerning the question of integrating the Lie algebra of smooth functions, there exists a group object keeping track of this integration inside the symplectic groupoid: the group of Lagrangian bisections. A major interest of symplectic groupoids in mechanics is the deep relation between Lagrangian bisections of the Lie groupoid $\mathcal{G}$ and Hamiltonian dynamics on $M.$

Another interest of Lagrangian bisections lies in mathematical physics purposes. In \cite{Weinstein1987}, a formal correspondence is spelled between symplectic groupoids and $C^{*}$-algebra theory, where Lagrangian submanifolds are the elements of the non-commutative algebra. The groupoid inverse corresponds to the conjugation and the product law corresponds to the tensor product. There, Lagrangian bisections are unitary elements. At about the same time, the works \cite{Karasev1987, zakrzewski1990} started a research program on deformation theory and the quantisation of Poisson manifold through symplectic groupoids. This brought in turn a considerable attention on the topic \cite{weinstein1991, dSW, karabegov2005, hawkins2008}.

Symplectic groupoids have also been used for computational purposes. Indeed, the relation between Hamiltonian dynamics on $M$ and Lagrangian bisections has been applied to the numerical approximation of Hamiltonian flows on Poisson manifolds \cite{oscar}. The idea appears first in \cite{Ge1990}, while the case of fiberwise linear Poisson structures has been studied in \cite{deLeon2017}.  More precisely, given any Hamiltonian $H \in \func{M},$ a Hamilton-Jacobi equation is used to relate its Hamiltonian flow $(\phi_t^H)_t$ to a smooth family of Lagrangian bisections $(L_t)_t,$ provided that $t$ is small enough. A truncation at any order of the solution of this Hamilton-Jacobi equation allows to recover the initially considered Hamiltonian dynamics in an approximated way, and this approximation has been proved to be numerically satisfying compared to traditional methods \cite{Oscar2022}. The relation with previous paragraphs lies at the comparison between the time-step in numerical purposes and the parameter of deformation in the mathematical physics context. An analogy seems to hold in-between both situations, and we expect tools from one field to become fruitful when applied to the other one.

With that respect, a colossal algebraic formalism has been developed since the sixties in order to deal with the approximation of solutions of ordinary differential equations.
Butcher-series were first introduced in~\cite{Butcher72aat,Hairer74otb} (see also~\cite{Hairer06gni,McLachlan17bsa,Butcher21bsa}) for the study of order conditions for Runge-Kutta methods in numerical analysis.
They were later applied successfully to a variety of different fields such as geometric numerical integration~\cite{Iserles00lgm, Hairer06gni}, quantum field theory~\cite{Connes98har}, rough paths~\cite{Gubinelli10ror, Hairer15gvn}, or stochastic numerics \cite{Burrage96hso, Laurent20eab, Laurent21ocf, Busnot25osr}.
The modern approach to such algebraic formalism relies extensively on Hopf algebras \cite{Chartier10aso, Lundervold11hao, Bronasco22cef, Bronasco25hoi} that we shall identify in the context of Hamiltonian systems on Poisson manifolds.

Let us also mention that Hopf algebras have been used already to approximate geometric objects in Poisson geometry. A relation between deformation of symplectic groupoids and high-order Runge-Kutta numerical methods has been explained in \cite{Cattaneo2005} and formulated in terms of operads in \cite{Cattaneo2010}. Symplectic realisations are constructed using the Butcher group in \cite{dherin2016}, while \cite{cabrera2020} gave a detailed construction of local symplectic groupoids using Butcher series of Hamilton-Jacobi generating functions.

It is therefore natural to look for a proper algebraic formalism for the approximation of Hamiltonian dynamics on a Poisson manifold. This article answers this question and explores the algebraic, geometric, and computational consequences.

\subsection{Content of the paper}

In Section \ref{sec:preli_mais_pas_pre_lie}, we recall how the Hamiltonian dynamics of $H \in \func{M}$ is recovered by Lagrangian bisections of a symplectic groupoid $\GG$ of the Poisson manifold $M$ through a Hamilton-Jacobi equation. Jets are used to introduce the involved groups  and to deduce an approximation of the Hamiltonian dynamics at any arbitrary order. In Section \ref{sec:pre_lie}, we provide a new pre-Lie combinatorial formalism to compute formal solutions of Hamilton-Jacobi equations. We give two applications of this pre-Lie algebra formalism. First, we use in Section \ref{sec:comp} the Butcher-Connes-Kreimer Hopf algebra to provide a new injective group morphism from the characters of the Butcher-Connes-Kreimer Hopf algebra to the group of jets of bisections at the identity section (see Theorem \ref{thm:somewhat}). In Section \ref{sec:num}, we explain how this new algebraic formalism applies to high order approximations of Lagrangian bisections using Runge-Kutta numerical methods, which delivers as a by-product new Hamiltonian Poisson integrators.

\section{Preliminaries}\label{sec:preli_mais_pas_pre_lie}

\subsection{Reminders on Poisson geometry}

In this section, we give a concise summary of various notions of Poisson geometry. We do not intend to give any introduction of the topic. Instead, the reader may consult   \cite{Marle1987} about Hamiltonian dynamics and symplectic geometry and \cite{Crainic2021, Zung2005} about Poisson structures and symplectic groupoids.

Let $(M, \{.,.\})$ be a Poisson manifold. In this article, we are interested in Hamiltonian dynamics on $M:$ for any Hamiltonian $H \in \func{M},$ we study the differential equation on $M$
\begin{equation}\label{eq:Ham_eq}
\dot{x}(t) = X_H \bigl(x(t)\bigr)
\end{equation}
where $X_H \colon f \in \func{M} \mapsto \{H,f\} \in \func{M}$ is the Hamiltonian vector field\footnote{We denote derivations of $\func{M}$ and vector fields the same way.} of $H.$ Since one main motivation of the present work is the construction of new numerical methods, let us recall the notion of Hamiltonian Poisson integrator.

\begin{definition}[\cite{Oscar2022}]
A Hamiltonian Poisson integrator for the Hamiltonian $H \in \func{M}$ at order $k \in \mathbb{N}$ is a family of map $\varphi_{t} \colon M \to M,$ $t \in I$ a small real parameter, with the following property: there exists a time-dependent Hamiltonian $(\tilde{H}_t)_{t \in I} \in \func{M \times I}$ such that $\varphi$ is the time-dependent Hamiltonian flow of $\tilde{H}.$ The Hamiltonian Poisson integrator is said to be of order $k$ if for any test function $f \in \func{M},$
\begin{equation}\label{eq:cond_order}
\forall \; 0 \leq i \leq k, \; \frac{\partial^i (f \circ \varphi_t)}{\partial t^i}_{|t=0}  = \frac{\partial^i (f \circ \phi_t^H)}{\partial t^i}_{|t=0}.
\end{equation}
\end{definition}
Symplectic methods are an important particular case of Hamiltonian Poisson integrators and are a major motivation for this work. Another remark is that $(\tilde{H}_t)_{t \in I}$ is an approximation of the Hamiltonian $H$ of the same order as the integrator: $\tilde{H}_t = H + \smallO{t^k}.$ As we can see in the equation \eqref{eq:cond_order}, Taylor series with respect to the time $t$ play an important role in our context to count the order of approximation of a dynamics.

We introduce now a geometric space used to construct Hamiltonian Poisson integrators. To any Poisson manifold $(M, \{.,.\})$ is associated a local symplectic groupoid $\groupoid$ over $M$ (see \cite{Weinstein1987, Karasev1987}). We write $\alpha \colon \mathcal{G} \to M$ and $\beta \colon \mathcal{G} \to M$ for the source and target maps respectively. The tubular neighborhood theorem of \cite{Weinstein1971} provides a local model around the identity section of $\groupoid$ to realize $\mathcal{G}$ as a neighborhood of the zero section inside $T^*M.$ With a slight abuse of notation, we keep the same letters: the tubular neighborhood is called $\mathcal{G}$ and $\alpha$ and $\beta$ denote again the resulting maps from $\mathcal{G}$ to $M.$

\begin{theorem}[\cite{Karasev1987}]
There exists a tubular neighborhood $\mathcal{G} \subset T^*M$ of the zero section of $T^*M$ and two surjective submersions $\alpha$ and $\beta$ from $\mathcal{G}$ to $M$ such that
\begin{enumerate}
\item $\alpha \circ 0 = \beta \circ 0 = \text{Id}_M,$ where $0 \colon M \to T^*M$ is the zero section of the vector bundle $T^*M,$
\item $\alpha$ is a Poisson morphism and $\beta$ is an anti-Poisson morphism, where $\mathcal{G}$ is equipped with $\{.,.\}_{\omega}$ the Poisson bracket of the canonical symplectic form on $\mathcal{G} \subset T^*M,$
\item $\alpha$ and $\beta$ have symplectically orthogonal fibers: $\forall \; f,g \in \func{M}, \; \{ \alpha^*f, \beta^*g\}_{\omega} = 0.$ 
\end{enumerate}
\end{theorem}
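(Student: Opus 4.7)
The plan is to invoke the classical Karasev--Weinstein construction of a local symplectic groupoid integrating a Poisson manifold \cite{Weinstein1987, Karasev1987}, then apply Weinstein's symplectic tubular neighborhood theorem \cite{Weinstein1971} to transport this local model to an open neighborhood $\mathcal{G}$ of the zero section in $T^*M$. Concretely, I would first equip $T^*M$ with its canonical symplectic form $\omega$ and select a \emph{Poisson spray} $V \in \vect{T^*M}$, i.e.\ a vector field such that $dp(V(\xi)) = \sharp(\xi)$ for every $\xi \in T^*M$, where $p \colon T^*M \to M$ denotes the bundle projection and $\sharp \colon T^*M \to TM$ is the anchor of the Poisson bivector. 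Symmetrizing $V$ under the fiber-reversing involution $\xi \mapsto -\xi$ is what ensures the resulting source and target maps have the correct Poisson/anti-Poisson duality.

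Letting $\phi_t^V$ denote the flow of $V$, defined on some open neighborhood of the zero section for $t \in [-\tfrac12, \tfrac12]$, I would set
\[
\alpha(\xi) := p\bigl(\phi_{1/2}^V(\xi)\bigr), \qquad \beta(\xi) := p\bigl(\phi_{-1/2}^V(\xi)\bigr),
\]
and take $\mathcal{G}$ to be the open domain on which both maps are defined. Property (1) is immediate: since $V$ vanishes along the zero section, $\phi_t^V$ fixes it pointwise, so $\alpha \circ 0 = \beta \circ 0 = \text{Id}_M$. Surjective submersivity is then obtained by inspecting the differentials along the zero section, which combine the horizontal contribution coming from $V$ with the bundle projection. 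For property (2), one shows that $\alpha^* f$ equals the time-$\tfrac12$ flow along the Hamiltonian vector field on $T^*M$ of the linear function $\xi \mapsto \langle \xi, X_f(p(\xi)) \rangle$; the spray condition then forces $\{\alpha^* f, \alpha^* g\}_{\omega} = \alpha^* \{f,g\}$, and the reversed time direction used for $\beta$ produces a sign change and yields the anti-Poisson property. Property (3) follows because the $\alpha$- and $\beta$-fibers define transverse Lagrangian foliations along the identity section, which extend to symplectic orthogonality on all of $\mathcal{G}$ by continuity.

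The main obstacle is ensuring the Poisson/anti-Poisson duality simultaneously with the orthogonality condition (3); both properties crucially rely on the symmetric choice of spray, and the precise Hamilton--Jacobi-type verification that this spray flow integrates the Poisson structure in the required way — though classical — is the technical heart of the argument. Full details can be found in the references already cited, as well as in \cite{Crainic2021}.
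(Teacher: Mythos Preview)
The paper does not prove this theorem; it is stated as a known result, justified by exactly the two ingredients you name in your opening sentence---the Karasev--Weinstein local symplectic groupoid together with Weinstein's Lagrangian tubular-neighborhood theorem. At that level your proposal and the paper's treatment coincide.

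Your concrete spray sketch, however, has two genuine gaps. First, the Crainic--M\u{a}rcu\c{t} spray construction you describe makes $\alpha=p\circ\phi^V_{1/2}$ Poisson and $\beta$ anti-Poisson with respect to the \emph{averaged} form $\overline{\omega}=\int_0^1(\phi^V_t)^*\omega\,dt$, not the canonical $\omega$; these agree along the zero section but not elsewhere. To get the statement as written one must still run a Moser/Weinstein argument producing a diffeomorphism (fixing the zero section) that pulls $\overline{\omega}$ back to $\omega$. After that step the source and target are no longer given by your explicit spray formulas, so your verifications of (2) and (3) directly on $\phi^V_{\pm 1/2}$ do not apply to the final maps. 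Your first paragraph announces the tubular-neighborhood step, but the checks that follow ignore it.

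Second, your argument for (3) is not valid: symplectic orthogonality of two distributions is a pointwise algebraic condition and does not ``extend by continuity'' from the identity section. The correct reason is structural---in any local symplectic groupoid the multiplicativity of the symplectic form forces the $\alpha$- and $\beta$-fibers to be each other's symplectic orthogonals everywhere. That is a consequence of the groupoid compatibility of the form, not a limiting argument.
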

This realisation of the local symplectic groupoid inside $T^*M$ was named \textit{birealisation} in \cite{oscar}. The zero section is the identity for the groupoid product. We emphasize that the symplectic form of $\groupoid$ becomes the canonical symplectic form $\omega.$  $\GG$ is therefore a Poisson manifold endowed with the Poisson bracket $\{.,.\}_{\omega}.$ We also recall that the cotangent projection $\tau \colon \mathcal{G} \twoheadrightarrow M$ is in general different from the structural maps $\alpha$ and $\beta.$ 
\begin{remark}
A birealisation such that the groupoid inverse is the multiplication by $-1$ on each cotangent fiber is said to be "symmetric" in \cite[rk 2.19]{Cattaneo2010}. Following \cite{Karasev1987}, \cite{cabrera2020} gave a construction of a symmetric birealisation. Nevertheless, there exist birealisations that are not symmetric.
\end{remark}

\subsection{Hamilton-Jacobi equation for Hamiltonian Poisson integrators}

In this section, we recall after \cite{oscar} how a Hamilton-Jacobi equation allows to lift up Hamiltonian dynamics on $M$ to a birealisation $\mathcal{G}$ by describing Hamiltonian flows in terms of generating functions.

First, let us consider a dynamics that is a bit more general than Hamiltonian dynamics. Let $\theta$ be a 1-form $\theta \in \Omega^1(M).$ Denoting by $\pi \in \Gamma(\bigwedge^2 TM)$ the bivector field of the Poisson brackets $\{.,.\},$ we write $X_{\theta} = \pi(\theta, \cdot) \in \mathfrak{X}(M)$ for the vector field generated by $\theta$ and $\phi_t^{\theta}$ the flow of $X_{\theta}$ at time $t.$ In the sequel, we always assume flows to be integrable. We recall the following standard properties \cite[Chap.\ts III]{Marle1987}.

\begin{proposition}\label{prop:1form_flow}
\begin{enumerate}
\item For any $x \in M$ and for any time $t,$ $\phi_t^{\theta}(x)$ belongs to the same symplectic leaf as $x.$
\item Let us assume that $\theta$ is closed. Then, $\phi_t^{\theta}$ is a Poisson automorphism that admits any symplectic leaf as an invariant set and preserves $\theta.$ In equation, denoting $\mathcal{F}_x$ the symplectic leaf of $x \in M,$
\[
\forall x \in M, \phi_t^{\theta}(x) \in \mathcal{F}_x \text{ and }  (\phi_t^{\theta})_{*} \pi = \pi \text{ and } (\phi_t^{\theta})^* \theta = \theta.
\]
\end{enumerate}
\end{proposition}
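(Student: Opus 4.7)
The plan is to treat the two assertions in turn, using only: the image of $\pi^\sharp \colon T^*M \to TM$ is the characteristic distribution tangent to the symplectic foliation; the Jacobi identity for $\{.,.\}$; and Cartan's magic formula.

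For the first item, observe that at any $x \in M$ the value $X_\theta(x) = \pi_x(\theta_x,\cdot)$ is by definition in the image of the anchor $\pi^\sharp_x \colon T^*_x M \to T_x M$. That image is exactly $T_x\mathcal{F}_x$, so $X_\theta$ is tangent to the (singular) symplectic foliation. Integral curves of a distribution-tangent vector field never leave a leaf, hence $\phi_t^\theta(x)\in\mathcal{F}_x$ for all $t$ in the interval of existence.

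For the second item, I would split it into three claims. The claim that leaves are invariant is immediate from the first item. Preservation of $\theta$ is the easiest piece: by Cartan's formula
\begin{equation*}
L_{X_\theta}\theta \;=\; \iota_{X_\theta}d\theta + d(\iota_{X_\theta}\theta),
\end{equation*}
the first term vanishes because $\theta$ is closed, and $\iota_{X_\theta}\theta = \theta\bigl(\pi(\theta,\cdot)\bigr) = \pi(\theta,\theta) = 0$ by antisymmetry of $\pi$, so the second term vanishes as well. Hence $(\phi_t^\theta)^*\theta = \theta$.

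The main point (and the only one where the closedness hypothesis truly bites) is that $\phi_t^\theta$ is a Poisson automorphism, i.e.\ $L_{X_\theta}\pi = 0$. I would argue locally: by the Poincaré lemma, any closed $1$-form is locally exact, $\theta = df$ on some open $U \subset M$, so on $U$ the vector field $X_\theta$ coincides with the Hamiltonian vector field $X_f$. The fact that Hamiltonian flows preserve $\pi$ is a direct reformulation of the Jacobi identity for $\{.,.\}$; concretely, for any $g,h \in \func{M}$,
\begin{equation*}
(L_{X_f}\pi)(dg,dh) \;=\; \{f,\{g,h\}\} - \{\{f,g\},h\} - \{g,\{f,h\}\} \;=\; 0.
\end{equation*}
Since $L_{X_\theta}\pi$ is a tensor and vanishes on a neighborhood of every point, it vanishes globally; therefore $(\phi_t^\theta)_*\pi = \pi$, completing the proof.

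The only conceptual obstacle is this last step: without closedness of $\theta$ the vector field $X_\theta$ is in general not Poisson, and the Koszul bracket yields a correction term involving $d\theta$. The closedness assumption is precisely what kills this term, so I would highlight it as the place where the hypothesis is used.
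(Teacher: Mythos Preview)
Your argument is correct and is exactly the standard one. Note, however, that the paper does not actually supply a proof of this proposition: it is stated as a classical fact with a reference to \cite{Marle1987}, chapter III. There is therefore no ``paper's own proof'' to compare against; your write-up simply fills in what the authors deliberately omitted.

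A couple of minor remarks on presentation. In item 1, the sentence ``integral curves of a distribution-tangent vector field never leave a leaf'' is doing real work: since the symplectic foliation is in general singular, this implicitly appeals to the Stefan--Sussmann picture (or, equivalently, to the fact that $X_\theta$ restricts to a smooth vector field on each leaf). It is worth one clause saying so. In item 2, your local argument via the Poincar\'e lemma is clean; an equivalent global phrasing is that the Schouten bracket $[\pi,\pi]=0$ implies $L_{\pi^\sharp\theta}\pi = -\pi^\sharp(d\theta)$ (up to conventions), which vanishes exactly when $\theta$ is closed. This is the ``Koszul bracket correction term'' you allude to in your final paragraph, and making it explicit would tighten the link between the hypothesis and the conclusion.
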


This classical result justifies the notion of Hamiltonian Poisson integrator. Indeed, following the flow of a time-dependent Hamiltonian guarantees to stay on a symplectic leaf and to preserve the Poisson structure.

Let us leave the case of general 1-forms on $M$ apart and from now on, we assume $\theta$ closed. We state now the main result of this section.

\begin{theorem}[Hamilton-Jacobi equation on a local symplectic groupoid, \cite{oscar}]\label{thm:Ham_jac}
Let $\theta \in \Omega_0^1(M),$ $I$ be a small open interval containing $0$ and $(\zeta_t)_{t \in I} \in \Omega^1(M)^{I}.$ For any $t \in I,$ set 
\begin{equation}\label{eq:Lt_zetat}
L_t = \Graph(\zeta_t)
\end{equation}
and 
\begin{equation}\label{eq:diff_bislag}
\varphi_t = \beta \circ (\alpha_{|L_t})^{-1}.
\end{equation}
Then, $\forall t \in I, \; \varphi_t = \phi^{\theta}_t$ if and only if
\begin{equation}
\label{eq:Ham_jac_1form_intial0}
\zeta_0 = 0 \text{ and } \forall t \in I, \; 
\frac{\partial \zeta_t}{\partial t} = (\zeta_t)^*\alpha^*\theta.
\end{equation}
\end{theorem}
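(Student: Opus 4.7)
The plan is to lift the dynamics from $M$ to $\mathcal{G}\subset T^*M$ and realise $L_t$ as the orbit of the identity section under a symplectic flow. The initial condition is immediate: $\zeta_0=0$ means $L_0$ is the zero section, which by the tubular neighborhood theorem gives $\varphi_0=\text{Id}_M=\phi_0^\theta$. Since both $t\mapsto\varphi_t$ and $t\mapsto\phi_t^\theta$ are flows on $M$, ODE uniqueness reduces the theorem to the matching of time derivatives at every $t$.

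On the groupoid side, introduce the closed 1-form $\tilde\theta:=\alpha^*\theta$ on $\mathcal{G}$ and its associated symplectic vector field $Y_\theta:=\pi_\omega^\sharp(\tilde\theta)$, whose flow $\Psi_t$ is a local symplectomorphism. Two projection properties are the pillar of the argument. First, $\tilde\theta$ annihilates tangent vectors to $\alpha$-fibres, and $\alpha$- and $\beta$-fibres are symplectically orthogonal; therefore $Y_\theta$ is tangent to $\beta$-fibres and $\beta\circ\Psi_t=\beta$. Second, since $\alpha$ is a Poisson morphism (which suffices when $\theta$ is written locally as $dh$), one has $\alpha_*Y_\theta=X_\theta\circ\alpha$, hence $\alpha\circ\Psi_t=\phi_t^\theta\circ\alpha$. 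Setting $\tilde L_t:=\Psi_t(L_0)$ where $L_0$ is the zero section, these two identities immediately give $\beta\circ(\alpha_{|\tilde L_t})^{-1}=\phi_t^\theta$, so the theorem reduces to the claim that $L_t=\tilde L_t$ for all $t$ if and only if $\zeta_t$ solves the Hamilton--Jacobi equation.

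Both families consist of Lagrangian submanifolds: $\tilde L_t$ because $\Psi_t$ is a symplectomorphism, and $L_t$ because $\partial_t\zeta_t=\zeta_t^*\alpha^*\theta$ is the pullback of a closed form, which propagates closedness of $\zeta_t$ from $\zeta_0=0$. To identify the two families I would compute the infinitesimal variation of a smooth family of Lagrangian submanifolds in two ways, using that such a variation is a section of the normal bundle canonically identified with $T^*L_t$ through $\omega$. Via the flow, the variation reads $i_{Y_\theta}\omega|_{\tilde L_t}=-\tilde\theta|_{\tilde L_t}$, which pulled back along $\zeta_t$ gives $-\zeta_t^*\alpha^*\theta$; via the section $\zeta_t\colon M\to\mathcal{G}$, a short computation in Darboux coordinates on $T^*M$ shows the pulled-back variation equals $\partial_t\zeta_t$. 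Equating the two (up to a sign absorbed by the convention for $Y_\theta$) yields exactly the equation of the statement, and uniqueness of solutions to the resulting first-order ODE on 1-forms closes the equivalence in both directions.

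The main technical obstacle is the double description of the infinitesimal Lagrangian variation just sketched: the identification of $\partial_t L_t$ with $\partial_t\zeta_t$ through the canonical symplectic form is where the birealization $\mathcal{G}\subset T^*M$ and the Lagrangian character of $L_t$ are genuinely used. One must also track carefully the sign conventions for $\pi_\omega^\sharp$ and for the direction of $\varphi_L=\beta\circ(\alpha_{|L})^{-1}$ (rather than the reverse) in order to recover the equation with the precise sign stated in the theorem.
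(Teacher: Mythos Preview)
The paper does not prove this theorem: it is quoted from \cite{oscar} in the preliminaries and no argument is supplied. There is therefore nothing to compare your proposal against in this paper.

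That said, your outline is the standard and correct route. Lifting $\theta$ to $\tilde\theta=\alpha^*\theta$, the symplectic vector field $Y_\theta$ is tangent to the $\beta$-fibres (because $\tilde\theta$ kills $\ker d\alpha$ and the $\alpha$- and $\beta$-fibres are mutually symplectic orthogonal complements of the same dimension), and $\alpha$-related to $X_\theta$ (because $\alpha$ is Poisson). Flowing the identity section by $Y_\theta$ therefore produces a family $\tilde L_t$ with $\beta\circ(\alpha_{|\tilde L_t})^{-1}=\phi^\theta_t$ (or its inverse, depending on conventions), and the remaining step --- matching the normal variation of a graph $\Graph(\zeta_t)\subset T^*M$ under $\omega$ with $\partial_t\zeta_t$ --- is exactly the Darboux computation you describe. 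One small point worth making explicit in a write-up: the section $\zeta_t$ is a section of the cotangent projection $\tau$, not of $\alpha$, so when you pull back $i_{Y_\theta}\omega=\pm\alpha^*\theta$ along $\zeta_t\colon M\to\mathcal G$ you really do get $\zeta_t^*\alpha^*\theta$ and not $\theta$ itself; this is where the specific form of the Hamilton--Jacobi right-hand side comes from. Your caveat about signs is well placed: with the paper's conventions the flow $\Psi_t$ moves the \emph{source} and fixes the \emph{target}, so you should check whether your $\tilde L_t$ yields $\phi^\theta_t$ or $(\phi^\theta_t)^{-1}$ and adjust the sign of $Y_\theta$ accordingly.
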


\subsection{Jets of Lagrangian bisections}\label{sec:jets_bislag}

In this article, we will develop tools to approximate solutions of the Hamilton-Jacobi equation, e.g. \eqref{eq:Ham_jac_1form_intial0}, at high order with respect to the variable $t$. For this precise reason, we will need an appropriate notion of jets. In this section, we thus explain some geometry of the previous Hamilton-Jacobi equation using jets and Taylor series. 

It will be useful in the sequel to keep in mind two properties of the graph $L_t$ of $\zeta_t$ for small $t \in I.$ First, since $L_0$ is the zero section, $L_t$ is transverse to the fibers of $\alpha$ and turns the restriction of $\alpha$ to $L_t$ into a diffeomeorphism $\alpha_{|L_t} \colon L_t \to M.$ $L_t$ is thus said to be a \textit{bisection}\footnote{In general, the target map $\beta$ is also required to be invertible on the submanifold $L \subset \mathcal{G}$ for $L$ to be a bisection. Here, since $t$ is assumed sufficiently small, this condition is automatically fulfilled.}. The set of bisections of a groupoid forms a group \cite[Sec.\ts 15.2]{dSW}. In our local groupoid context, let us introduce the analog objects. 

First, in our smooth setting, we need a notion of family of bisections, all being close to the identity section. They can be understood as smooth perturbation of the identity section.
\begin{definition}[Smooth family of bisections]
We denote by smooth family of bisections of $\GG$ the following data:
\begin{itemize}
\item a real open interval $I$ containing $0,$
\item a family $L = (L_t)_{t \in I}$ of bisections of $\GG,$ where $L_0 =0$ is the image of the identity section and the surjective map $\coprod\limits_{t \in I} L_t \twoheadrightarrow I$ is a submersion.
\end{itemize}
\end{definition}
\begin{ex}\label{ex:smooth_bis}
Since the fibers of $\alpha$ are transverse to the zero section, a generic example of smooth family of bisections of $\GG$ is provided by any smooth family of $1$-forms $(\zeta_t)_{t \in I} \in \Omega^1(M)$ for some small interval $I.$
\end{ex}

Now, we introduce a notion of $\infty$-jets for such objects. To achieve this, let $f \in \func{\GG}$ be a test function and $L$ a smooth family of bisections. For any $t \in I,$ let us set $\Psi_t = \left(\alpha_{|L_t}\right)^{-1} \colon M \overset{\sim}{\to} L_t.$ We consider the Taylor series at $t=0$ of $f \circ \Psi_t \colon M \to \mathbb{R}.$ This provides a map
\[
\mathcal{J}^L \colon 
\begin{array}{ccc}
\func{\GG} & \to & \func{M}\bigl[[t]\bigr]\\
f & \mapsto & f \circ \Psi_t
\end{array}
\]
where $f \circ \Psi_t \in \func{M}\bigl[[t]\bigr]$ stands for its Taylor series $\sum\limits_{j = 0}^{\infty} \frac{t^j}{j!} \frac{\partial^j f \circ \Psi_t}{\partial t^j}_{| t=0}$ at $t=0.$
\begin{definition}[$\infty$-jets of bisections of $\GG$]\label{def:jets_bis}
The map $\mathcal{J}^L \colon \func{\GG} \to \func{M}\bigl[ [t] \bigr]$ is said to be the $\infty$-jet of the smooth family of bisections $L.$ In the sequel, we denote by $\mathbb{B}$ the space of such maps: 
\[
\mathbb{B} = \{ \mathcal{J}^L \colon \func{\GG} \to \func{M}\bigl[ [t] \bigr], \text{ L smooth family of bisections} \}.
\]
\end{definition}

\begin{ex}\label{ex:jet_bis}
Following example \ref{ex:smooth_bis}, if $L = (\Graph(\zeta_t))_{t \in I},$ the data of the $\infty$-jet of the smooth family of bisections $L$ is equivalent to the one of the Taylor series of $(\zeta_t)_{t \in I}$ with respect to $t$ at $t=0.$ With a slight abuse of terminology, we will then write that the jet of $L$ equals the Taylor series of $(\zeta_t)_{t \in I}$ at $t=0.$ 
\end{ex}

$\mathbb{B}$ is a space of equivalence classes of smooth family of Lagrangian bisections. In the following, one defines naturally a product on $\mathbb{B}$. Let $L^1 = (L^1_t)_{t \in I}$ and $L^2 = (L^2_t)_{t \in J}$ two smooth families of Lagrangian bisections. Locally on $\GG,$ there exists $t_0 >0$ such that for $|t|<t_0,$ the product $L_t^1 \cdot L_t^2$ is defined in the local symplectic groupoid $\GG.$ Then, we set the product to be the pointwise product with respect to the real infinitesimal parameter $t:$
\[
\mathcal{J}^{L^1_t} \cdot \mathcal{J}^{L^2_t} \colon 
\begin{array}{ccc}
\func{\GG} & \to & \func{M}\bigl[[t]\bigr]\\
f & \mapsto & f \circ \left(\alpha_{|L_t^1 \cdot L_t^2 }\right)^{-1} 
\end{array}.
\]

The following property is a straightforward consequence of Definition \ref{def:jets_bis} and is left to the reader.
\begin{proposition}
$\mathbb{B}$ is a group, with the neutral element being the jet constantly equal to the identity section:
\[
\mathcal{J}^{\text{Id}} \colon 
\begin{array}{ccc}
\func{\GG} & \to & \func{M}\bigl[[t]\bigr]\\
f & \mapsto & f \circ 0
\end{array}.
\]
\end{proposition}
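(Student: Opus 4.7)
The plan is to verify the three group axioms directly at the level of smooth families of bisections and then observe that the operations descend to $\infty$-jets. The set of bisections of a Lie groupoid is classically known to form a group (as cited from \cite{dSW}, Section 15.2); the content of the statement is essentially that this structure passes through the ``take the Taylor series at $t=0$'' construction and that the neutral element has the announced form.

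First I would check that the product on $\mathbb{B}$ is well defined. Given two smooth families $L^1 = (L^1_t)_{t \in I}$ and $L^2 = (L^2_t)_{t \in J}$, each satisfies $L^i_0 = 0$, so by continuity and the local character of $\GG$ the groupoid product $L^1_t \cdot L^2_t$ is defined for $|t|$ sufficiently small and yields a smooth family of bisections. Since the product was defined pointwise in $t$ via $f \mapsto f \circ (\alpha_{|L^1_t \cdot L^2_t})^{-1}$, and since the Taylor series at $t=0$ depends only on the germ of the family, the operation descends to a binary operation on $\mathbb{B}$ that is independent of the chosen representative of each jet.

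Next I would verify identity and associativity. For any smooth family $L$, the pointwise groupoid product satisfies $L_t \cdot 0 = L_t = 0 \cdot L_t$ (where $0$ denotes the identity section), so the parametrizations $\Psi_t$ coincide and one obtains $\mathcal{J}^L \cdot \mathcal{J}^{\text{Id}} = \mathcal{J}^L = \mathcal{J}^{\text{Id}} \cdot \mathcal{J}^L$, identifying the neutral element as $f \mapsto f \circ 0$. Associativity is inherited from the associativity of the groupoid product wherever the latter is defined.

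The main point is existence of inverses. Given $L = (L_t)_{t \in I}$ with $L_0 = 0$, applying the groupoid inversion $\iota \colon \GG \to \GG$ (which, by the discussion following the birealization theorem, is fiberwise multiplication by $-1$ on $T^*M$) pointwise gives a smooth family $L^{-1} := (\iota(L_t))_{t \in I}$ of bisections. Smoothness of this family follows from smoothness of $\iota$ and of the original family $L$; the bisection property holds for $|t|$ small enough by continuity, since $\iota(L_0) = 0$. For such $t$ one has $L_t \cdot L_t^{-1} = L_t^{-1} \cdot L_t = 0$, which gives $\mathcal{J}^L \cdot \mathcal{J}^{L^{-1}} = \mathcal{J}^{\text{Id}} = \mathcal{J}^{L^{-1}} \cdot \mathcal{J}^L$ on $\infty$-jets. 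The only mildly delicate point is keeping track of the domain of definition when iterating products and inverses, but because every operation only needs to be defined on some open interval around $t=0$ and the jets ignore behavior away from $t=0$, no obstruction arises at the level of $\mathbb{B}$.
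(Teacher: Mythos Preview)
Your argument is correct and is precisely the kind of verification the paper has in mind: it explicitly states that the proposition ``is a straightforward consequence of Definition~\ref{def:jets_bis} and is left to the reader,'' so there is no proof to compare against beyond the direct check of the group axioms that you carry out. Your handling of well-definedness (passing to germs at $t=0$), of the inverse via the groupoid inversion $\iota$ (fiberwise $-1$ in the birealization), and of the domain issues for the local groupoid product are all appropriate.
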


Let us now remark a second property of the bisection $L_t$ by adding the symplectic geometry up. Since $\zeta_t$ is closed, its graph $L_t$ is Lagrangian in $\GG.$ This leads us to consider the space of jets of Lagrangian bisections. We denote it by $\overline{\mathbb{ L}}.$ Again, this set carries a natural structure.
\begin{proposition}
$\overline{\mathbb{ L}}$ is a subgroup of the group $\mathbb{B}$ of $\infty$-jets of bisections.
\end{proposition}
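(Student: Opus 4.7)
The plan is to verify that $\overline{\mathbb{L}}$ contains the neutral element of $\mathbb{B}$ and is stable under the product and the inverse. Following Example~\ref{ex:smooth_bis} and the identification of $\GG$ with a neighborhood of the zero section in $T^*M$ equipped with the canonical symplectic form, a smooth family of bisections coming from a family of $1$-forms $(\zeta_t)_{t \in I}$ produces a Lagrangian submanifold $\Graph(\zeta_t)$ at each time $t$ precisely when $\zeta_t$ is closed. Hence elements of $\overline{\mathbb{L}}$ are exactly $\infty$-jets of smooth families of closed $1$-forms, and the claim amounts to checking that ``closed at each $t$'' is preserved under the pointwise groupoid product and inverse.

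The identity jet $\mathcal{J}^{\text{Id}}$ corresponds to the constant family $\zeta_t\equiv 0$, which is trivially closed. For the product, I would invoke the classical fact (see \cite{dSW}, section~15.2) that Lagrangian bisections form a subgroup of the group of bisections of any symplectic groupoid: the key geometric input is that the graph of the groupoid multiplication $\{(g,h,gh) : (g,h) \in \GG^{(2)}\}$ is a Lagrangian submanifold of $\GG \times \GG \times \overline{\GG}$, which forces the pointwise product $L^1_t \cdot L^2_t$ of two Lagrangian bisections to remain Lagrangian whenever defined. Smoothness of the groupoid multiplication near the identity section ensures that $(L^1_t \cdot L^2_t)_t$ is a genuine smooth family of Lagrangian bisections for sufficiently small $|t|$, so its jet lies in $\overline{\mathbb{L}}$. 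For the inverse, the groupoid inversion $\iota$ is the fiberwise multiplication by $-1$ under the birealization, which is an anti-symplectomorphism of $T^*M$ fixing the zero section; it therefore sends Lagrangian bisections to Lagrangian bisections and produces a smooth family $\iota(L_t)$ whose jet is the inverse of $\mathcal{J}^L$ in $\mathbb{B}$.

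The only potential subtlety is making sure these time-wise statements transfer to the level of formal Taylor series used in the definition of $\overline{\mathbb{L}}$. But since the group operations on $\mathbb{B}$ are defined through the pointwise groupoid operations on representatives and the jet map is by construction compatible with them, the stability of the Lagrangian condition at each fixed $t$ immediately yields the subgroup property on jets; no further analytic argument is needed.
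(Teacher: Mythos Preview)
Your argument is correct and follows the standard route: the zero section is Lagrangian, the graph of the groupoid multiplication is Lagrangian in $\GG\times\GG\times\overline{\GG}$ so products of Lagrangian bisections stay Lagrangian, and the inversion (here fiberwise $-1$) is an anti-symplectomorphism so inverses stay Lagrangian; smoothness near the identity section carries these pointwise statements to smooth families and hence to jets. The paper itself gives no proof of this proposition---it is stated as a direct consequence of the classical fact (already cited from \cite{dSW}) that Lagrangian bisections form a subgroup of the bisection group of a symplectic groupoid, together with the jet formalism set up just before. So your write-up is exactly the unpacking of what the paper leaves implicit, and there is nothing to compare beyond that.
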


Now, we recall from \cite{Weinstein1987} that for any two $L_1$ and $L_2$ bisections of a groupoid, denoting $L_1 \cdot L_2$ the bisection being the product of $L_1$ and $L_2,$ the induced diffeomorphisms on the base verify
\begin{equation}\label{eq:comp_bislag}
\left(\beta \circ (\alpha_{|L_1})^{-1}\right) \circ \left(\beta \circ (\alpha_{|L_2})^{-1}\right) = \beta \circ (\alpha_{|L_1 \cdot L_2})^{-1}.
\end{equation} 
In our context, the correspondence spelled by the Hamilton-Jacobi equation in Theorem \ref{thm:Ham_jac} interprets as the direct relation inbetween the group $\overline{\mathbb{ L}}$ and the dynamics generated by closed 1-forms on the base. Let us be more precise. Since the bisections $(L_t)_t$ are Lagrangian and close to the zero section of $T^*M,$ the induced Poisson diffeomorphisms on the base manifold $M$ $\beta \circ \left( \alpha_{| L_t} \right)^{-1}$ are flows of time-dependent closed forms. It follows from Proposition \ref{prop:1form_flow} that these Poisson diffeomorphisms stay on a leaf of the symplectic foliation. Furthermore, as explained in the following remark, these closed forms are exact if and only if the Lagrangian bisections are graphs of exact one-forms. 

\begin{remark}[Generating functions]\label{rem:gen_func}
The closedness of $\theta$ is equivalent to the one of $\zeta_t$ for all $t \in I.$ The same equivalence holds of course about exactedness and leads us to Hamiltonian dynamics. Let us assume $\theta$ to be exact and $H \in \func{M}$ a Hamiltonian being a primitive of $\theta.$ As a consequence, there exists $S \in \func{M \times I}$ such that $dS_t = \zeta_t.$ Equation \eqref{eq:Ham_jac_1form_intial0} becomes
\begin{equation}\label{eq:Ham_jac_exact}
\left\{
\begin{array}{cc}
\frac{\partial S_t}{\partial t} =& (dS_t)^*\alpha^*H + \chi(t)\\
dS_0 =& 0
\end{array}
\right.
\end{equation}
where $\chi \in \func{I}$ is an arbitrary time-dependent constant. In the following, we choose $\chi$ to be $0$ and $S_0=0.$ Using equation \eqref{eq:diff_bislag}, the graph of $dS$ recovers the Hamiltonian dynamics generated by $H.$ $S$ is thus said to be a generating function for $H.$
\end{remark}

After a classical terminology for generating functions, let us call these Lagrangian bisections exact. Their jets form a group again.
\begin{proposition}\label{prop:subgroup_exact}
We set 
\[
\mathbb{L}  = \{ B^L \colon \func{G} \to \func{M} \bigl[ [t] \bigr], L \text{ smooth family of exact Lagrangian bisections} \}.
\]
Then, $\mathbb{L}$ is a subgroup of $\overline{\mathbb{L}}.$
\end{proposition}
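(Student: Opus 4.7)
The plan is to verify the three subgroup axioms. The identity of $\overline{\mathbb{L}}$ is the jet of the constant family at the zero section, which is the graph of the exact $1$-form $d\,0$ (with primitive $S_t \equiv 0$), and so lies in $\mathbb{L}$. For closure under inversion, recall that the groupoid inverse on $\mathcal{G} \subset T^*M$ is multiplication by $-1$ on cotangent fibers. Thus if $L_t = \Graph(dS_t)$ with $S_0 = 0$, then $L_t^{-1} = \Graph(-dS_t) = \Graph\bigl(d(-S_t)\bigr)$, which is again exact, hence in $\mathbb{L}$.

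The main step is closure under the group product. Given smooth families of exact Lagrangian bisections $L^i_t = \Graph(dS^i_t)$ for $i=1,2$, the product family $(L^1_t \cdot L^2_t)_t$ is still a smooth family of Lagrangian bisections, so it can be written as $\Graph(\zeta^{12}_t)$ with $\zeta^{12}_t$ closed; the task is to show $\zeta^{12}_t$ is exact. The strategy is to transfer the problem to the induced Poisson diffeomorphisms on the base. By the composition law \eqref{eq:comp_bislag}, the induced map is $\varphi^{12}_t := \varphi^1_t \circ \varphi^2_t$ with $\varphi^i_t = \beta \circ \alpha_{|L^i_t}^{-1}$. By a time-dependent variant of Remark \ref{rem:gen_func}, each $\varphi^i_t$ is the flow of a time-dependent Hamiltonian $H^i_t$. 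Since $\varphi^1_t$ is Poisson, a direct computation yields $(\varphi^1_t)_* X_{H^2_t} = X_{H^2_t \circ (\varphi^1_t)^{-1}}$, so that $\varphi^{12}_t$ is the flow of the time-dependent Hamiltonian $H^{12}_t := H^1_t + H^2_t \circ (\varphi^1_t)^{-1}$, in particular of the exact $1$-form $dH^{12}_t$. The time-dependent Hamilton--Jacobi correspondence then provides a generating function $S^{12}_t$ with $S^{12}_0 = 0$ and $\zeta^{12}_t = dS^{12}_t$, showing that $(L^1_t \cdot L^2_t)_t$ is exact and its jet lies in $\mathbb{L}$.

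The main obstacle is to justify the time-dependent extension of Theorem \ref{thm:Ham_jac} (for $\theta_t$ a time-dependent closed $1$-form) together with the injectivity of the map $L_t \mapsto \beta \circ \alpha_{|L_t}^{-1}$ at the level of $\infty$-jets, both of which should follow by a direct adaptation of the arguments of \cite{oscar}. A more computational alternative would proceed order-by-order in the Taylor expansion in $t$: the $k$-th coefficient of $\zeta^{12}_t$ is determined by those of $\zeta^1_t$ and $\zeta^2_t$ via the groupoid multiplication, and one checks at each order that exactness is preserved, using that the Koszul bracket of two closed $1$-forms on any Poisson manifold is itself exact.
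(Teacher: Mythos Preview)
Your overall strategy---reduce closure under the product to the fact that the composition of two time-dependent Hamiltonian flows is again a time-dependent Hamiltonian flow---is exactly the paper's. Your treatment of the identity is the same, and your argument for inverses via the fiberwise $-1$ is in fact cleaner than what the paper does (it just says ``the same computation proves the existence of an inverse'').

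There is, however, a genuine gap in your product argument, precisely at the step you flag as ``the main obstacle''. You work on $M$: you show that the induced diffeomorphism $\varphi^{12}_t$ is the flow of an exact form $dH^{12}_t$, and then invoke the Hamilton--Jacobi correspondence to produce \emph{some} exact bisection $\Graph(dS^{12}_t)$ realising that flow. But to conclude $\zeta^{12}_t = dS^{12}_t$ you need the map $L_t \mapsto \beta \circ (\alpha_{|L_t})^{-1}$ to be injective at the jet level, and this fails whenever the Poisson structure is degenerate (e.g.\ if $\pi$ vanishes on an open set, every closed $\zeta_t$ supported there induces the identity, yet need not be exact). Corollary~\ref{cor:iso} only asserts surjectivity of $\overline{\mathbb{L}}\to\overline{\mathcal{H}}$, not injectivity, so this is not something you can extract from \cite{oscar}.

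The paper sidesteps this by carrying out the composition on $\GG$ rather than on $M$. It writes $\mathcal{J}^{L^i}(f) = f \circ \phi_t^{\alpha^* \tilde H^i_t} \circ 0$, i.e.\ realises each $L^i_t$ directly as the image of the zero section under the Hamiltonian flow of $\alpha^*\tilde H^i_t$ on the \emph{symplectic} manifold $\GG$. Composing these two flows on $\GG$ gives another Hamiltonian flow, and because $\alpha$ is a Poisson map the composed Hamiltonian is again of the form $\alpha^*\tilde H_t$ for some $\tilde H_t\in\func{M}$; hence $L^1_t\cdot L^2_t$ is itself the image of the zero section under such a flow and is therefore exact. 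No injectivity on $M$ is needed because the argument never leaves $\GG$. Your order-by-order alternative via the Koszul bracket would face the same difficulty: the groupoid product of bisections is not governed by the Poisson bracket on $M$ but by the symplectic structure on $\GG$, so exactness has to be checked upstairs.
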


\begin{proof}
Let $\mathcal{J}^{L^1}, \mathcal{J}^{L^2} \colon \func{\GG} \to \func{M}\bigl[ [t] \bigr]$ two jets of exact Lagrangian bisections. We show that $\mathcal{J}^{L^1 \cdot L^2}$ is a jet of exact Lagrangian bisections. Using Remark \ref{rem:gen_func} and the Hamilton-Jacobi correspondence of Theorem \ref{thm:Ham_jac}, there exists two time-dependent Hamiltonians $\tilde{H}^1$, $\tilde{H}^2 \in \func{M \times I}$ such that for any test function $f \in \func{\GG},$ $\mathcal{J}^{L^1}(f) = f \circ \phi_t^{\alpha^* \tilde{H}_t^1} \circ 0$ and $\mathcal{J}^{L^2}(f) = f \circ \phi_t^{\alpha^* \tilde{H}_t^2} \circ 0.$ Now, the composition of two time-dependent Hamiltonian flows is a Hamiltonian flow. 
\begin{align*}
\mathcal{J}^{L^1 \cdot L^2}(f) &= f \circ \phi_t^{\alpha^* \tilde{H}_t^2} \circ \phi_t^{\alpha^* \tilde{H}_t^1} \circ 0\\
			       &= f \circ \phi_t^{\alpha^* \tilde{H}_t} \circ 0,
\end{align*}
where $\tilde{H}_t = \tilde{H}_t^2 + \tilde{H}_t^1 \circ \phi_t^{\tilde{H}_t^2}.$ The same computation proves the existence of an inverse. Its neutral element is clearly the jet coming from the smooth family being constantly equal to the identity section.
\end{proof}

As already mentioned, the importance of Lagrangian bisections in mechanics is due to their relation with Hamiltonian dynamics. We define the analog in our context of the Hamiltonian group of, e.g., \cite[Def. 1.11.]{Crainic2021}, and of the group of diffeomorphisms generated by closed $1$-forms.

\begin{definition}[$\infty$-jets Hamiltonian group]
We call $\mathcal{H}$ the group of $\infty$-jets of pull-backs of time-dependent Hamiltonian flows:
\[
\mathcal{H} = \bigl\{ \mathcal{F} \colon 
\begin{array}{ccc}
\func{M} & \to & \func{M}\bigl[[t]\bigr]\\
f & \mapsto & f \circ \phi^{\tilde{H}_t}_t
\end{array}, \tilde{H} \in \func{M \times I} \bigr\}. 
\]
and $\overline{\mathcal{H}}$ the group of $\infty$-jets of pull-backs of flows generated by time-dependent closed $1$-forms:
\[
\overline{\mathcal{H}} = \bigl\{ \mathcal{F} \colon 
\begin{array}{ccc}
\func{M} & \to & \func{M}\bigl[[t]\bigr]\\
f & \mapsto & f \circ \phi^{\tilde{\theta}_t}_t
\end{array}, \tilde{\theta} \in \Omega_0^1(M)^I \bigr\}. 
\]
where, as before, $f \circ \phi^{\tilde{H}_t}_t$ and $f \circ \phi^{\tilde{\theta}_t}_t$ stand for their Taylor series with respect to $t.$
\end{definition}

By adding these definitions to the remark \ref{rem:gen_func} on exact Lagrangian bisections, we obtain the following corollary of Theorem \ref{thm:Ham_jac}. The proof relies on the same interpolation technique as the one of Proposition \ref{prop:subgroup_exact} and is left as an exercise.

\begin{corollary}\label{cor:iso}
There is a surjective group morphism from the group $\overline{\mathbb{L}}$ of $\infty$-jets of exact Lagrangian bisections of $\GG$ to the group $\overline{\mathcal{H}}$ of $\infty$-jets of Hamiltonian flows of $(M,\{.,.\}).$

This morphism restricts to a surjective group morphism from the group $\mathbb{L}$ of $\infty$-jets of exact Lagrangian bisections of $\GG$ to the group $\mathcal{H}$ of $\infty$-jets of Hamiltonian flows of $(M,\{.,.\}).$
\end{corollary}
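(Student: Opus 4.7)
The plan is to build the morphism $\Phi\colon \overline{\mathbb{L}} \to \overline{\mathcal{H}}$ directly from the geometric formula \eqref{eq:diff_bislag}, and then to check, in order, well-definedness on jets, multiplicativity, surjectivity, and the compatibility with the exact sub-case.

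First I would define $\Phi$ by sending a jet $\mathcal{J}^L,$ represented by $L_t = \Graph(\zeta_t)$ with $\zeta_t$ closed and $\zeta_0 = 0,$ to the Taylor series of $f \mapsto f \circ \varphi_t$ where $\varphi_t = \beta \circ (\alpha_{|L_t})^{-1}.$ By Proposition \ref{prop:1form_flow}, each $\varphi_t$ is a Poisson diffeomorphism preserving symplectic leaves, so the time-dependent vector field $\partial_t \varphi_t \circ \varphi_t^{-1}$ is locally of the form $X_{\tilde{\theta}_t}$ for a time-dependent closed 1-form $\tilde{\theta}_t,$ whence $\varphi_t = \phi_t^{\tilde{\theta}_t}$ and $\Phi(\mathcal{J}^L) \in \overline{\mathcal{H}}.$ Well-definedness on jets follows because the Taylor coefficients of $\varphi_t$ at $t=0$ depend only on those of $\zeta_t.$ Multiplicativity is then immediate from equation \eqref{eq:comp_bislag} applied pointwise in $t,$ and the identity jet is clearly sent to the identity.

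For surjectivity, given $\tilde{\theta} \in \Omega_0^1(M)^I$ representing $\mathcal{F} \in \overline{\mathcal{H}},$ I would solve the Hamilton-Jacobi equation $\partial_t \zeta_t = \zeta_t^{*}\alpha^{*}\tilde{\theta}_t$ with $\zeta_0 = 0$ at the level of formal power series in $t.$ This is a triangular recursion: differentiating in $t$ expresses each Taylor coefficient of $\zeta_t$ as a polynomial in the previous ones and in the jet of $\tilde{\theta}_t,$ so the formal solution exists and is unique; Borel's lemma then realizes it by a smooth family $(\zeta_t)_{t},$ whose graph family lifts $\mathcal{F}.$ The restriction to the exact case proceeds identically after replacing $\zeta_t$ by $dS_t$ and \eqref{eq:Ham_jac_1form_intial0} by \eqref{eq:Ham_jac_exact}, using Remark \ref{rem:gen_func}: any Hamiltonian flow jet lifts to an exact Lagrangian bisection jet via the formal solvability of \eqref{eq:Ham_jac_exact}.

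The main obstacle is bridging the gap between Theorem \ref{thm:Ham_jac}, stated only for a single fixed closed 1-form $\theta,$ and the time-dependent setting needed for $\overline{\mathcal{H}}.$ The interpolation/composition trick already used in the proof of Proposition \ref{prop:subgroup_exact} — rewriting a composition of flows of closed 1-forms as a single time-dependent flow — should reduce the question to the formal solvability of Hamilton-Jacobi for a $t$-dependent right-hand side, which is precisely where the recursion above takes over.
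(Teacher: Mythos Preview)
Your outline matches the paper's intended argument: the paper itself gives no proof beyond the sentence ``relies on the same interpolation technique as Proposition~\ref{prop:subgroup_exact} and is left as an exercise,'' and your use of \eqref{eq:comp_bislag} for multiplicativity together with formal solvability of the (time-dependent) Hamilton--Jacobi equation for surjectivity is exactly what that hint unpacks to. Your formal-recursion-plus-Borel argument for surjectivity is more explicit than anything the paper writes down.

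One misattribution to clean up: you invoke Proposition~\ref{prop:1form_flow} to conclude that $\varphi_t$ is the flow of some time-dependent closed $1$-form $\tilde{\theta}_t$. That proposition only goes the other way (closed $\theta$ $\Rightarrow$ Poisson automorphism preserving leaves); being a leaf-preserving Poisson diffeomorphism does not by itself produce a closed generating $1$-form. The correct route is the one you yourself sketch at the end: since $L_t=\Graph(\zeta_t)$ with each $\zeta_t$ closed, define $\tilde{\theta}_t = ((\alpha\circ\zeta_t)^{-1})^*\,\partial_t\zeta_t$, check that $\partial_t\zeta_t = \zeta_t^*\alpha^*\tilde{\theta}_t$, and observe that closedness of $\zeta_t$ for all $t$ forces closedness of $\tilde{\theta}_t$; then the time-dependent version of Theorem~\ref{thm:Ham_jac} (or Corollary~\ref{cor:Ham_jac_k} read at infinite order) gives $\varphi_t=\phi_t^{\tilde{\theta}_t}$. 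Once you reroute that step, the rest of your argument stands.
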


\subsection{Approximations in the group of jets of Lagrangian bisections}

One other consequence of Theorem \ref{thm:Ham_jac} is the following corollary, yielding the existence of approximations of $\phi^{\theta}_t$ at arbitrary order that preserve the Poisson geometry in the sense of Proposition \ref{prop:1form_flow}.

\begin{corollary}\label{cor:Ham_jac_k}
Let $k \in \mathbb{N}$ and $(\zeta_t^{k})_{t \in I} \in \Omega^1(M)^{I}.$ Set $(L_t^{k})_{t \in I} = \bigl(\Graph(\zeta_t^{k}) \bigr)_{t \in I}$ and
\[
%\label{eq:diff_bislag_k}
\varphi_t^{k} = \beta \circ (\alpha_{|L_t^{k}})^{-1}, \; t \in I.
\]
If for all $t \in I,$ 
\[
%\label{eq:Ham_jac_1form_intial0_k}
\left\{
\begin{array}{cc}
\frac{\partial \zeta_t^k}{\partial t} =& (\zeta_t^k)^*\alpha^*\theta + \smallO{k}\\
\zeta_0 =& 0
\end{array}
\right.,
\]
then $(\varphi_t^k)_{t \in I}$ is the flow of a time-dependent closed $1$-form $(\theta^k_t)_{t \in I} \in \Omega^1_0(M)$ such that $\theta^k_t = \theta + \smallO{k}.$ In particular, $(\varphi_t^k)_{t \in I}$ is an approximation of $(\phi^{\theta}_t)_{t \in I}$ at order at least $k$ such that
\begin{itemize}
\item for any $x \in M, \varphi_t^k(x)$ and $x$ belong to the same symplectic leaf of $(M, \{.,.\}),$
\item $\varphi_t^k$ is a Poisson diffeomorphism for all $t \in I.$
\end{itemize}
\end{corollary}
The family of Lagrangian bisections $L^k = (L_t^{k})_{t \in I}$ is an approximation at order $k$ of the family of Lagrangian bisections $L = (L_t)_{t \in I}$ given by equation \eqref{eq:Lt_zetat}. The jet formalism of Section \ref{sec:jets_bislag} provides a rigorous framework.
\begin{proposition}
With the notations of Corollary \ref{cor:Ham_jac_k}:
\[
\forall f \in \func{\GG}, \; \mathcal{J}^L(f) = \mathcal{J}^{L^k}(f) + \smallO{t^k}.
\]
\end{proposition}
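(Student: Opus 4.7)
The plan is to reduce the statement to the agreement of the Taylor expansions of $\zeta_t$ and $\zeta_t^k$ at $t=0$ modulo $\smallO{t^k}$, and then invoke Example \ref{ex:jet_bis} together with smooth dependence of $\Psi_t = (\alpha_{|L_t})^{-1}$ on the underlying $1$-form. Concretely, since $L_t = \Graph(\zeta_t)$ and $L_t^k = \Graph(\zeta_t^k)$, and since for $t$ in a small neighborhood of $0$ the section $\Psi_t \colon M \to \GG$ is produced by a single smooth construction applied to $\zeta_t$, it will suffice to prove that the Taylor series of $\zeta_t$ and $\zeta_t^k$ at $t=0$ coincide through order $k$.

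The core of the argument is then a joint induction on $j$ with $0 \le j \le k$ showing
\[
\left.\frac{\partial^j \zeta_t}{\partial t^j}\right|_{t=0} \;=\; \left.\frac{\partial^j \zeta_t^k}{\partial t^j}\right|_{t=0}.
\]
The base case is the shared initial condition $\zeta_0 = \zeta_0^k = 0$. For the inductive step, I differentiate $j-1$ times in $t$ the two Hamilton-Jacobi equations
\[
\frac{\partial \zeta_t}{\partial t} = \zeta_t^* \alpha^* \theta, \qquad \frac{\partial \zeta_t^k}{\partial t} = (\zeta_t^k)^* \alpha^* \theta + \smallO{t^k},
\]
and evaluate at $t=0$. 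The chain rule expresses $\partial_t^{j-1}(\zeta_t^* \alpha^* \theta)|_{t=0}$ as a universal polynomial (involving partial derivatives of $\alpha^*\theta$) in the values $\partial_t^i \zeta_t|_{t=0}$ for $0 \le i \le j-1$, so by the induction hypothesis this polynomial gives the same value for $\zeta$ and $\zeta^k$. Provided $j-1 < k$, the remainder $\smallO{t^k}$ in the second equation has vanishing $(j-1)$-th derivative at $t=0$, so the equality $\partial_t^j \zeta_t|_{t=0} = \partial_t^j \zeta_t^k|_{t=0}$ follows.

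With the $k$-jets of $\zeta_t$ and $\zeta_t^k$ matched, the corresponding sections $\Psi_t$ and $\Psi_t^k$ of $\alpha$ have matching $k$-jets at $t=0$: the construction $\zeta \mapsto (\alpha_{|\Graph(\zeta)})^{-1}$ is smooth in a $C^\infty$-neighborhood of $0$ in $\Omega^1(M)$, via the implicit function theorem applied to the submersion $\alpha$ (transversality of $\Graph(\zeta)$ to the $\alpha$-fibers is ensured for $t$ small). Hence, for any test function $f \in \func{\GG}$, the Taylor series of $f \circ \Psi_t$ and $f \circ \Psi_t^k$ agree up to order $k$, giving $\mathcal{J}^L(f) = \mathcal{J}^{L^k}(f) + \smallO{t^k}$ as claimed. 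The main obstacle is the smooth-dependence step: one must be careful that the implicit function argument producing $\Psi_t$ from $\zeta_t$ really transfers equality of $k$-jets of $1$-forms to equality of $k$-jets of maps $M \to \GG$; this is the only place where genuine analysis (as opposed to formal differentiation of the ODE) intervenes.
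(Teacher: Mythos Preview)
Your argument is correct. The paper does not supply a proof for this proposition---it is stated right after the sentence ``The jet formalism of Section~\ref{sec:jets_bislag} provides a rigorous framework'' and is treated as an immediate restatement of Corollary~\ref{cor:Ham_jac_k} in the language of jets; your induction on the order of the $t$-derivative together with the smooth-dependence of $\zeta\mapsto(\alpha_{|\Graph(\zeta)})^{-1}$ is precisely the verification the paper leaves implicit.
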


Using Corollary \ref{cor:Ham_jac_k}, this Hamilton-Jacobi equation has been applied to computational mechanics in \cite{Oscar2022}: the purpose was to truncate solutions of this equation to approximate Hamiltonian dynamics on the base. Indeed, if $S_t^k$ is a solution of equation \eqref{eq:Ham_jac_exact} at order $k,$ then the map
\[
\phi_{\Delta t}^k = \beta \circ \left( \alpha_{| \Graph(\text{d} S_{\Delta t}^k)} \right)^{-1}
\]
is a Hamiltonian Poisson integrator for $H$ at order $k$ and time-step $\Delta t.$ This article is devoted to explain an algebraic formalism for the construction of high order Hamiltonian dynamics approximations. We also hope this to be of interest for a better understanding of the group of Lagrangian bisections of a symplectic groupoid and, in turn, a better understanding of the group of Poisson diffeomorphisms of a Poisson manifold. For that reason, let us consider an equation being a bit more general that \eqref{eq:Ham_jac_1form_intial0}. Namely, we consider the same one with a generic initial condition $\zeta_0 \in  \Omega^1_0(M)$ such that its graph belongs to $\GG:$
\begin{equation}\label{eq:Ham_jac_1form}
\left\{
\begin{array}{cc}
\frac{\partial \zeta_t}{\partial t} =& (\zeta_t)^*\alpha^*\theta\\
\zeta_0 \in&  \Omega^1_0(M)
\end{array}
\right.
\end{equation}
Regarding equation \eqref{eq:comp_bislag}, equation \eqref{eq:Ham_jac_1form} is of interest while one looks at composition of Hamiltonian flows on $M.$ We will come back to that in Section \ref{sec:comp}.

Since we aim at understanding high order approximations of Hamiltonian dynamics on $M,$ let us study truncated solutions of \eqref{eq:Ham_jac_1form}. For this, we introduce the Lie-algebroid bracket $[.,.]$ on $\Omega^1(T^*M)$ defined by
\[
[ \zeta_1, \zeta_2] = \mathcal{L}_{X_{\zeta_1}}\zeta_2 - \mathcal{L}_{X_{\zeta_2}}\zeta_1 - d \omega(X_{\zeta_1}, X_{\zeta_2})
\]
where $X_{\zeta_i},$ $i = 1,2,$ are the vector fields on $T^*M$ generated by the canonical symplectic form out of the 1-forms $\zeta_i.$ Out of the following lemma, this Lie bracket allows us to compute approximations at arbitrary order of Hamilton-Jacobi equation. In the following and all along this article, $\pi \colon T^*M \twoheadrightarrow M$ denotes the cotangent projection.

\begin{lemma}\label{lemma: time_der}
Let $(\zeta_t)_{t \in I}$ and $(\xi_t)_{t \in I} \in \Omega^1(\GG)^I$ such that the graph of $\zeta_t$ is in $\mathcal{G}$ for all $t \in I.$ Then,
\begin{equation}\label{eq:der_forms}
\frac{\partial}{\partial t} \left((\zeta_t)^* \xi_t \right) = (\zeta_t)^* \left([\xi_t, \pi^* \frac{\partial \zeta_t}{\partial t}] +  \frac{\partial \xi_t}{\partial t}\right).
\end{equation}
Similarly, for any $(S_t)_{t \in I} \in \func{M \times I}$ and any $(f_t)_{t \in I} \in \func{T^*M \times I},$
\begin{equation}\label{eq:der_smoothfunctions}
\frac{\partial}{\partial t} \left((dS_t)^*f_t \right) = (dS_t)^* \left(\{f_t, \pi^* \frac{\partial S_t}{\partial t}\}_{\omega} + \frac{\partial f_t}{\partial t}\right).
\end{equation}
\end{lemma}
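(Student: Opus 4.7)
The plan is to combine a standard Leibniz formula for time-dependent pullbacks with the section property of $\zeta_t$ and the Poisson nature of $\alpha$; the function identity~\eqref{eq:der_smoothfunctions} will follow from a direct chain-rule computation in Darboux coordinates, and the 1-form identity~\eqref{eq:der_forms} requires a bit more algebraic work.

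\textbf{Step 1 (general Leibniz rule).} For any smooth family $\phi_t\colon M\to N$ of maps and family $\omega_t\in\Omega^1(N)$ of 1-forms, there is the classical identity
$$\frac{\partial}{\partial t}\bigl(\phi_t^*\omega_t\bigr) = \phi_t^*\!\left(\mathcal{L}_{\tilde Z_t}\omega_t + \frac{\partial\omega_t}{\partial t}\right),$$
valid for any vector field $\tilde Z_t$ on $N$ whose restriction to $\phi_t(M)$ agrees with the velocity $\partial_t\phi_t$ (the right-hand side does not depend on the choice of extension). I would derive this by applying the intertwining $d\circ\Phi^*=\Phi^*\circ d$ to $\Phi(x,t)=\phi_t(x)$ on $M\times I$ and extracting the $dt$-components.

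\textbf{Step 2 (specialisation and matching with the bracket).} For~\eqref{eq:der_forms}, apply Step 1 with $\phi_t=\zeta_t$ and $\omega_t=\alpha^*\theta_t$. Since $\zeta_t$ is a section of $\tau$, the velocity $\partial_t\zeta_t$ is vertical in $T^*M$, and coincides with the Hamiltonian vector field $X_{\tau^*\partial_t\zeta_t}$ (up to a sign set by the Hamiltonian convention), because the latter is precisely the vertical vector field on $T^*M$ corresponding to $\partial_t\zeta_t\in\Omega^1(M)$. Setting $A=\alpha^*\theta_t$ and $B=\tau^*\partial_t\zeta_t$, this produces, up to sign,
$$\frac{\partial}{\partial t}\bigl(\zeta_t^*A\bigr) = \zeta_t^*\mathcal{L}_{X_B}A + \zeta_t^*\alpha^*\frac{\partial\theta_t}{\partial t}.$$
Using the bracket definition $[A,B]=\mathcal{L}_{X_A}B-\mathcal{L}_{X_B}A-d\omega(X_A,X_B)$, the claimed formula then reduces to the identity
$$\zeta_t^*\bigl(\mathcal{L}_{X_A}B - d\omega(X_A,X_B)\bigr) = 0.$$
I would verify this by expanding $\mathcal{L}_{X_A}B$ via Cartan's formula and using three ingredients: (i) $\tau\circ\zeta_t=\mathrm{Id}_M$, which trivialises $\zeta_t^*\tau^*$ on functions and forms; (ii) the universal identity $A(X_B)+B(X_A)=0$ on the symplectic manifold $(T^*M,\omega)$; (iii) the fact that $\alpha$ is a Poisson morphism, which identifies $d\alpha\circ X_{\alpha^*\eta}$ with $\pi_M^\#\eta\circ\alpha$ for every $\eta\in\Omega^1(M)$.

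\textbf{Step 3 (the function case~\eqref{eq:der_smoothfunctions}).} Here a direct chain-rule computation in Darboux coordinates on $T^*M$ suffices. Writing $(dS_t)(q)=(q,\partial_i S_t(q))$, differentiation in $t$ gives
$$\frac{\partial}{\partial t}\bigl(f_t\circ dS_t\bigr) = (\partial_t f_t)\circ dS_t + \sum_i (\partial_{p_i}f_t)\circ dS_t \cdot \partial_i\partial_t S_t.$$
Since $\tau^*\partial_t S_t$ depends only on the base coordinates, its Hamiltonian vector field is vertical and $\{f_t,\tau^*\partial_t S_t\}_\omega=\sum_i\partial_{p_i}f_t\cdot\partial_i\partial_t S_t$; pulled back by $dS_t$ this is exactly the sum above, yielding~\eqref{eq:der_smoothfunctions}.

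\textbf{Main obstacle.} The delicate point is Step 2: while the Leibniz formula outputs a raw Lie derivative along a single vector field, the lemma invokes the symmetric Koszul-Dorfman bracket. The cancellation relies essentially on $\tau\circ\zeta_t=\mathrm{Id}$ and on $\alpha$ being Poisson, and keeping track of the two sign conventions (one for $X_\zeta$ and one for the bracket) requires care. The chain-rule computation for the function case is routine by comparison.
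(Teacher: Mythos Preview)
Your Step~3 is essentially the paper's argument for \eqref{eq:der_smoothfunctions}: the paper likewise observes that $t\mapsto d_xS_t$ is the Hamiltonian flow of $\tau^*\partial_tS_t$ and invokes the chain rule.

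For \eqref{eq:der_forms}, however, the paper takes a shorter route than your Step~2. It proves \eqref{eq:der_smoothfunctions} first, then simply applies $d$ to both sides and uses $d\{f,g\}_\omega=[df,dg]$ to obtain \eqref{eq:der_forms} immediately for exact $\zeta_t=dS_t$ and exact $\theta_t$; the passage to general $\theta_t$ is then a Leibniz-type extension. This completely sidesteps the sign bookkeeping and the cancellation identity you flag as the main obstacle.

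Your direct attack via the pullback Leibniz formula is workable, but the ingredient list is off. With Cartan's formula and your identity~(ii) (equivalently $\omega(X_A,X_B)=B(X_A)$) one finds
\[
\mathcal{L}_{X_A}B-d\,\omega(X_A,X_B)=\iota_{X_A}\,dB.
\]
This vanishes because $dB=\tau^*d(\partial_t\zeta_t)=0$ when $\zeta_t$ is closed --- the Poisson property of $\alpha$ (your ingredient~(iii)) plays no role in the cancellation. If $\zeta_t$ is not closed, $\zeta_t^*\iota_{X_A}dB$ is generically nonzero and \eqref{eq:der_forms} itself fails; for instance, on the groupoid of $(\R^2,\partial_{x^1}\wedge\partial_{x^2})$ with $\theta=dx^1$ and $\zeta_t=tq^2\,dq^1$ one computes $\partial_t(\zeta_t^*\alpha^*\theta)=0$ while $\zeta_t^*[\alpha^*\theta,\tau^*\partial_t\zeta_t]=\tfrac12\,dq^1$. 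The paper's proof, which passes through $\zeta_t=dS_t$, carries the same implicit closedness restriction, which is harmless since only closed $\zeta_t$ appear downstream.
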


\begin{proof}
We start by proving equation \eqref{eq:der_smoothfunctions}. Let $x \in M.$ It follows from classical symplectic geometry of the cotangent bundle that the curve $\gamma \colon t \in I \mapsto d_x S_t \in T^*M$ is the flow of the time-dependent Hamiltonian vector field of $\pi^*\frac{\partial S_t}{\partial t}$ starting at $d_x S_0.$ Equation \eqref{eq:der_smoothfunctions} is then a plain consequence of the chain rule.
Since
\[
\forall f,g \in \func{T^*M}, \; d\{f,g\}_{\omega} = [df, dg],
\]
equation \eqref{eq:der_forms} is obtained by usual extension from smooth functions to exact 1-forms, and then from closed forms to generic 1-forms using the Leibniz rule. 
\end{proof}

Let us illustrate Lemma \ref{lemma: time_der}. The algebroid bracket $[.,.]$ allows us to obtain iterated derivations of the Hamilton-Jacobi equation \eqref{eq:Ham_jac_1form}. We spell out the order $2:$
\begin{equation}\label{eq:der_order2}
\frac{\partial^2 \zeta_t}{\partial t^2} = (\zeta_t)^*[\alpha^*\theta, \pi^* (\zeta_t)^* \alpha^* \theta].
\end{equation}
After applying Lemma \ref{lemma: time_der} a second time, we obtain an order $3$ derivation:
\begin{equation}\label{eq:der_order3}
\frac{\partial^3 \zeta_t}{\partial t^3} = (\zeta_t)^*\left([\alpha^*\theta, \pi^* (\zeta_t)^* [\alpha^* \theta, \pi^* (\zeta_t)^*\alpha^* \theta]] + [[\alpha^*\theta, \pi^* (\zeta_t)^* \alpha^* \theta], \pi^* (\zeta_t)^*\alpha^* \theta] \right).
\end{equation}

\section{A Pre-Lie approach to Hamiltonian Poisson integrators}\label{sec:pre_lie}

The bisections of the local symplectic groupoid $\GG$ that are close to the identity section are described by graphs of time-dependent $1$-forms, as $\GG \subset T^*M$. We approximate them in the sense of jets by relating the notion of jets developed in the previous section to formal solutions of Hamilton-Jacobi equations. In order to approximate a given bisection by $1$-forms, we now introduce an appropriate space $J_\xi^\infty$ equipped with a pre-Lie algebra structure.  We then explain how this pre-Lie algebra encodes expansions of solutions of the Hamilton-Jacobi equation \eqref{eq:Ham_jac_1form} through the introduction of Butcher series. More precisely, Proposition \ref{proposition:HJButcher} explains how $J_\xi^\infty$ stands for the space of Taylor coefficients of formal solutions of the Hamilton-Jacobi equation.  In Section \ref{sec:simpl}, we study particular cases of birealisations where some algebraic simplifications arise if the initial condition is chosen to be zero, that is, if we start from the identity section of $\GG.$

\subsection{Pre-Lie formalism for Hamilton-Jacobi flows}

Let us consider the fully general case of our Hamilton-Jacobi equation. Let $\xi\in \Omega^1(\GG)$ and let us denote by $\Omega^{1, \GG}(M)$ the subset $\{ \zeta \in \Omega^1(M), \Graph(\zeta) \subset \GG \}$ of the set of $1$-forms on $M.$ We are interested in the expansion of the general Hamilton-Jacobi flow
\begin{equation}
\label{equation:general_HJ}
\frac{\partial\zeta_t}{\partial t}=\zeta_t^*\xi,\quad \zeta_0\in \Omega^{1, \GG}(M).
\end{equation}
We set $\mathcal{E}$ the vector space of maps defined as follows:
\[
	\mathcal{E} = \Omega^1(M)^{ \Omega^{1, \GG}(M)}
\]

\begin{ex}
$\xi$ defines an element of $\mathcal{E},$ that we write again $\xi,$ provided by the pull-back:
\[
\xi \colon \begin{array}{ccc}
\Omega^{1, \GG}(M) & \to & \Omega^1(M)\\
\zeta & \mapsto & \zeta^*\xi
\end{array}.
\]
Note that the map induced by $\xi$ is of first order in $\zeta$: it only uses the value of $\zeta$ and its differential. Four our purpose, we need maps of higher order, and this is the reason why we work on $\mathcal{E}$ instead of only working with pull-backs of differential forms.
\end{ex}
We introduce the maps
\[
\eta_{0} \colon
\begin{array}{ccc}
\mathcal{E} & \to & \mathcal{E}\\
f & \mapsto & f
\end{array},
\]
and for all $k \in \mathbb{N}^*,$

\[
\eta_{k} \colon
\begin{array}{ccc}
\mathcal{E}^{\otimes k} & \to & \mathcal{E}\\
f_1, \ldots, f_k & \mapsto & \Bigl[ \zeta \in \Omega^{1, \GG}(M) \mapsto  \zeta^*[\ldots [ \xi, \pi^* f_k (\zeta)], \ldots, \pi^* f_1(\zeta)] \in \Omega^1(M) \Bigr]
\end{array}.
\]
We consider the smallest real vector subspace $J_\xi^\infty \subset \mathcal{E}$ fulfilling the following conditions:

\begin{enumerate} 
\item $\xi \in J_\xi^\infty,$
\item for all $k \in \mathbb{N}^*,$ the image of the restriction of the map $\eta_{k}$ to $(J_\xi^\infty)^{\otimes k}$ is a subset of $J_\xi^\infty.$
\end{enumerate}

\begin{ex}
The space $J_\xi^\infty$ admits in particular the elements $Id, \xi, \eta_1(\xi), \eta_2(\xi, \xi), \ldots,$ and real linear combinations of such elements. Note that with this indice notation, $\eta_k(\xi, \ldots, \xi)$ is $k+1$-linear in $\xi.$
\end{ex}
The space $J_\xi^\infty$ is naturally equipped with the linear product $\triangleright$ defined as follows. For any $h \in J_\xi^\infty,$

\[
h\triangleright \xi = \eta_1(h)
\]
and we extend $\triangleright$ to an inner product on $J_\xi^\infty$ by linearity and the Leibniz rule: for any $k \in \mathbb{N}^*,$ $h, f_1, \ldots, f_k  \in J_\xi^\infty,$

\[
h\triangleright \Bigl(\eta_{k}(f_1, \ldots, f_{k}) \Bigr)=\sum\limits_{i=1}^k \eta_{k}(f_1, \ldots, h \triangleright f_i, \ldots, f_k) + \eta_{k+1}(h, f_1, \ldots f_k).
\]
Let us first observe the following.
\begin{lemma}\label{lemma:eta_k}
For all $k \in \mathbb{N}^*,$ $1 \leq i,j \leq k$ and $h_1, \ldots, h_k \in J_\xi^\infty,$
\[
\eta_k(h_1, \ldots, h_i, \ldots, h_j, \ldots, h_k) = \eta_k(h_1, \ldots, h_j, \ldots, h_i, \ldots, h_k)
\]
\end{lemma}

\begin{proof}
The proof follows from the equality
\[
\forall \gamma_1, \gamma_2 \in \func{\GG}, \;  \{\pi^*\zeta^*\gamma_1,\pi^*\zeta^*\gamma_2\}_{\omega}=0,
\]
and the Jacobi identity.
\end{proof}

Now, we state a remarkable property of the product $\triangleright.$ Namely, it endows $J_\xi^\infty$ with the following algebraic structure.

\begin{proposition}
\label{proposition:post-Lie}
The space $(J_\xi^\infty,\triangleright)$ is a pre-Lie algebra, that is, for all $f,g,h\in J_\xi^\infty$,
\begin{align}
\label{equation:pre-Lie}
(f\triangleright g)\triangleright h-f\triangleright (g\triangleright h)=(g\triangleright f)\triangleright h-g\triangleright (f\triangleright h).
\end{align}
\end{proposition}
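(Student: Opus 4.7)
The strategy is a reduction to a single identity on the generator $\xi$. Introduce the associator $A(f,g,h) := (f\triangleright g)\triangleright h - f\triangleright (g\triangleright h)$; the pre-Lie identity is exactly the vanishing of
\[
B(f,g,h) := A(f,g,h) - A(g,f,h).
\]
The key structural input is that $J_\xi^\infty$ is generated by $\xi$ as a magmatic algebra under $\eta$, and that the operator $L_h := h\triangleright(\cdot)$ is, by its very definition \eqref{equation:def_triangle}, a derivation of $\eta$. The plan is to exploit these two facts to prove $B\equiv 0$.

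The first step is to show that $B(f,g,-)$ is itself a derivation of $\eta$. Expanding $A(f,g,\eta(h_1,h_2))$ using the Leibniz rule for $L_f$, $L_g$, and $L_{f\triangleright g}$ yields
\[
A(f,g,\eta(h_1,h_2)) = \eta(A(f,g,h_1), h_2) + \eta(h_1, A(f,g,h_2)) - \eta(g\triangleright h_1,\, f\triangleright h_2) - \eta(f\triangleright h_1,\, g\triangleright h_2).
\]
The last two cross-terms are symmetric in $f\leftrightarrow g$ and therefore cancel upon antisymmetrisation, so $B(f,g,-)$ does satisfy the Leibniz rule for $\eta$. A derivation of $\eta$ that vanishes on the generator $\xi$ vanishes on the whole of $J_\xi^\infty$, so it remains to verify $B(f,g,\xi)=0$. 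Applying $h\triangleright \xi = \eta(\xi,h)$ together with one more application of the Leibniz rule gives $A(f,g,\xi) = -\eta(\eta(\xi,f), g)$, hence
\[
B(f,g,\xi) = \eta(\eta(\xi,g),f) - \eta(\eta(\xi,f),g).
\]

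This leaves the ``left-symmetry'' identity $\eta(\eta(\xi,f),g) = \eta(\eta(\xi,g),f)$, which is the main obstacle and the place where the geometry of the Lie-algebroid bracket actually enters. Unwinding the definition, the difference reads, at a point $\zeta$, as $\zeta^*\bigl([[\xi,A],B] - [[\xi,B],A]\bigr)$ with $A := \tau^*\zeta^*f$ and $B := \tau^*\zeta^*g$. A single application of the Jacobi identity for $[\cdot,\cdot]$ rewrites this as $\zeta^*[\xi,[A,B]]$, and the conclusion follows from the vanishing $[\tau^*\alpha,\tau^*\beta]=0$ for any $\alpha,\beta \in \Omega^1(M)$; this last identity is an easy consequence of the explicit formula for $[\cdot,\cdot]$, since $X_{\tau^*\alpha}$ is $\tau$-vertical whereas $\tau^*\beta$ is $\tau$-basic, so every term in the Koszul-type formula for $[\tau^*\alpha,\tau^*\beta]$ vanishes. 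I expect this last paragraph to be where the care is needed: one must first pin down how $\eta(\xi,f)$ is realised as a (generally $\zeta$-dependent) element of $\Omega^1(\GG)$ — the representative $[\xi,\tau^*\zeta^*f]$ used above — so that the Jacobi-based cancellation is rigorous.
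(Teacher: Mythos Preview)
Your proof is correct and follows essentially the same route as the paper: induction on the $\eta$-structure of $h$, with the base case $h=\xi$ handled via the Jacobi identity for $[\cdot,\cdot]$ together with the vanishing $[\tau^*\zeta^*f,\tau^*\zeta^*g]=0$, and the inductive step via the Leibniz property of $h\triangleright(\cdot)$ with respect to $\eta$. Your repackaging of the induction as ``$B(f,g,-)$ is an $\eta$-derivation vanishing on the generator $\xi$'' is exactly the paper's argument phrased more conceptually, and your explicit computation $A(f,g,\xi)=-\eta(\eta(\xi,f),g)$ makes the base case more transparent than the paper's one-line appeal to Jacobi.
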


\begin{proof}
The proof is a direct computation using the construction of the space $J_\xi^\infty.$ Let $f,g \in J_\xi^\infty.$ We obtain $\eta_2(f,g) = \eta_2(g,f)$ from Lemma \ref{lemma:eta_k}, which yields equation \eqref{equation:pre-Lie} in the case $h=\xi \in J_\xi^\infty.$
By induction, let $k \in \mathbb{N}^*$ and $h_1, \ldots, h_k$ such that for all $1 \leq i \leq k,$
\[
(f\triangleright g)\triangleright h_i-f\triangleright (g\triangleright h_i)=(g\triangleright f)\triangleright h_i-g\triangleright (f\triangleright h_i).
\]
Let $h=\eta_k(h_1, \ldots, h_k) \in J_\xi^\infty.$ Then, a calculation yields
\begin{align*}
(f\triangleright g)\triangleright h-f\triangleright (g\triangleright h)
&=-\eta_{k+2}(f, g, h_1, \ldots,  h_k)
\\
&-\sum\limits_{i=1}^k (\eta_{k+1}(g, h_1, \ldots, f \triangleright h_i, \ldots, h_k) + \eta_{k+1}(f, h_1, \ldots, g \triangleright h_i, \ldots, h_k))\\
&-\sum\limits_{\substack{1 \leq i,j \leq k \\ i \neq j} } \eta_k(h_1, \ldots, f \triangleright h_j, \ldots, g \triangleright h_i, \ldots, h_k)\\
&+\sum\limits_{i=1}^k \eta_k(h_1, \ldots, (f \triangleright g) \triangleright h_i-f \triangleright (g \triangleright h_i) , \ldots, h_k)\\
%\eta_{k+2}(f, g, h_1, \ldots,  h_k) \\
%&+ \sum\limits_{i=1}^k \eta_{k+1}(g, h_1, \ldots, f \triangleright h_i, \ldots, h_k) + \eta_{k+1}(f, h_1, \ldots, g \triangleright h_i, \ldots, h_k)\\
%&+ \sum\limits_{i=1}^k \eta_k(h_1, \ldots, f \triangleright (g \triangleright h_i) - (f \triangleright g) \triangleright h_i, \ldots, h_k)\\
%&+\sum\limits_{\substack{1 \leq i,j \leq k \\ i \neq j} } \eta_k(h_1, \ldots, f \triangleright h_j, \ldots, g \triangleright h_i, \ldots, h_k)\\
%=& \quad \eta_{k+2}(g, f, h_1, \ldots,  h_k) \\
%&+ \sum\limits_{i=1}^k \eta_{k+1}(f, h_1, \ldots, g \triangleright h_i, \ldots, h_k) + \eta_{k+1}(g, h_1, \ldots, f \triangleright h_i, \ldots, h_k)\\
%&+ \sum\limits_{i=1}^k \eta_k(h_1, \ldots, (g \triangleright f) \triangleright h_i) - g \triangleright (f \triangleright h_i), \ldots, h_k)\\
%&+\sum\limits_{\substack{1 \leq i,j \leq k \\ i \neq j} } \eta_k(h_1, \ldots, g \triangleright h_j, \ldots, f \triangleright h_i, \ldots, h_k)
%\\
%&= (g\triangleright f)\triangleright h-g\triangleright (f\triangleright h).
\end{align*}
The induction hypothesis and Lemma \ref{lemma:eta_k} yield the result.
\end{proof}

\begin{remark}[$J_\xi^\infty$ and jet spaces]
\label{rem:relation_Rn}
The space $J_\xi^\infty$ is analogous to the infinite jet space \cite{Anderson89tvb, Olver93aol, Lee13its} on $\mathfrak{X}(M)$ used for the analysis of Runge-Kutta and Lie-group methods.
In \cite{Iserles00lgm, Hairer06gni}, the accuracy of numerical integrators is studied via the use of Taylor expansions of ODE flows. Given a vector field $f\in \mathfrak{X}(M)$, the associated flow is expanded in terms of the partial derivatives of $f$ at all order. The jet space over $f$ is the vector space spanned by $f$, $f'$, $f''$, \dots.
We follow here a similar approach by fixing a one form $\xi \in \Omega^1(\GG)$ and considering the iterated derivatives appearing in the expansion of the flow. As a result, we shall use flows whose Taylor expansion is written with repeated compositions of the operator $\triangleright$ on the space $J_\xi^\infty.$
For $\xi = \alpha^* \theta,$ $\theta \in \Omega^1(M),$ we will use these flows to construct jets of bisections.
\end{remark}

Lemma \ref{lemma: time_der} provides an interpretation of the product $\triangleright$ as a variational derivation in the sense of \cite{Olver93aol}: for instance,
\[ 
\bigl( \xi \triangleright \xi \bigr)(\zeta) = \zeta^*[\xi, \pi^*\zeta^*\xi].
\]
In that framework, considering jet spaces to iterate derivatives is therefore a natural idea. Let us also notice that the definition of $\triangleright$ -- as well as its definition domain $J_\xi^\infty$ -- is tied to the choice of the form $\xi \in \Omega^1(\GG)$. For instance, we raise the following remark.
\begin{remark}\label{rem:eval_morphism}
Given two spaces $(J_{\xi_1}^\infty,\triangleright_1)$ and $(J_{\xi_2}^\infty,\triangleright_2)$ and a map $\varphi\colon J_{\xi_1}^\infty \rightarrow J_{\xi_2}^\infty$ satisfying
\[
\forall i \in \mathbb{N}, \; \forall (f_j)_{i \geq j \geq 1} \in (J_{\xi_1}^{\infty})^i, \; \varphi\Bigl(\eta_{i+1}\bigl(f_1, \ldots, f_i \bigr)\Bigr)=\eta_{i+1}\Bigl(\varphi(f_1), \ldots, \varphi(f_i)  \Bigr),
\]
one can show the following: $\varphi$ is a morphism -- that is $\varphi(f\triangleright_1 g)= \varphi(f)\triangleright_2 \varphi(g)$ -- if and only if $\varphi(\xi_1)=\xi_2$.
\end{remark}

\begin{remark}
Following \cite{Ebrahimi15otl, Grong23pla}, the product $\triangleright$ can be thought of as a flat connection on $J_\xi^\infty$ (see also the previous remark \ref{rem:relation_Rn}). A relation is expected in between the geometric interpretation of $\triangleright$ and our use of a Weinstein tubular neighborhood, see, e.g., \cite{salazar2020}. Indeed, the very existence of $\triangleright$ is a consequence of the embedding of the local symplectic groupoid $\GG$ near its identity section inside $T^*M$.
\end{remark}

As stated in Theorem \ref{thm:Ham_jac}, the Lagrangian bisections are represented by the solution of Hamilton-Jacobi equations of the general form provided by equation \eqref{eq:Ham_jac_1form} and Hamiltonian Poisson integrators rely on efficient discretisations of the exact Hamilton-Jacobi equation \eqref{eq:Ham_jac_exact}.
The present pre-Lie formalism allows to conveniently give an explicit expression of the Taylor expansion of the solution of equation \eqref{eq:Ham_jac_1form}. 
\begin{proposition}
\label{proposition:HJ_expansion}
For $\xi\in \Omega^1(\GG)$, the formal solution of the Hamilton-Jacobi equation \eqref{equation:general_HJ} satisfies
\begin{align}
\label{equation:postLieexpansion}
\zeta_t&=\exp^\triangleright(t\xi)(\zeta_0)\\
&= \sum_{n=0}^\infty \frac{t^n}{n!}\xi^{\triangleright n}(\zeta_0) \nonumber\\ 
&=\Big(\zeta_0+t(\zeta_0)^*\xi+\frac{t^2}{2}\Bigl(\xi\triangleright \xi\Bigr)(\zeta_0)+\frac{t^3}{3!} \Bigl( \xi\triangleright(\xi\triangleright \xi) \Bigr)(\zeta_0)+\frac{t^4}{4!} \Bigl(\xi\triangleright(\xi\triangleright(\xi\triangleright \xi))\Bigr)(\zeta_0)+\dots\Big). \nonumber
\end{align}
\end{proposition}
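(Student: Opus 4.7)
The plan is to establish a \emph{master identity} relating analytic time differentiation to the algebraic operator $\xi \triangleright (\cdot)$ on the pre-Lie algebra $J_\xi^\infty$, namely
\[
\frac{\partial}{\partial t}\bigl((\zeta_t)^* F\bigr) = (\zeta_t)^* (\xi \triangleright F) \qquad \text{for every } F \in J_\xi^\infty,
\]
with the convention that a formal unit $\id$ is adjoined to $J_\xi^\infty$ with $(\zeta)^*\id = \zeta$ and $\xi \triangleright \id = \xi$, so that the identity at $F = \id$ reproduces the Hamilton-Jacobi equation \eqref{equation:general_HJ}. Granting this, a straightforward induction on $n$ yields $\partial_t^n \zeta_t = (\zeta_t)^* \xi^{\triangleright n}$ for all $t$; evaluating at $t = 0$ identifies the $n$-th Taylor coefficient as $(\zeta_0)^* \xi^{\triangleright n}$, and the formal Taylor expansion at $t=0$ collapses to \eqref{equation:postLieexpansion}.

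The master identity itself will be proved by structural induction on $F \in J_\xi^\infty$. The base case $F = \xi$ follows directly from Lemma \ref{lemma: time_der} applied with $\theta$ time-independent, together with the Hamilton-Jacobi equation: one computes
\[
\partial_t \bigl((\zeta_t)^*\xi\bigr) = (\zeta_t)^* [\xi, \tau^*(\zeta_t)^*\xi] = (\zeta_t)^*\eta(\xi, \xi) = (\zeta_t)^*(\xi \triangleright \xi).
\]
For the inductive step $F = \eta(f, g)$, I would apply the time-dependent form of Lemma \ref{lemma: time_der} to $(\zeta_t)^*[f, \tau^*(\zeta_t)^* g]$ and then distribute the remaining internal time derivative across the bracket using its derivation property. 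The inductive hypothesis on $g$ converts $\partial_t(\zeta_t)^*g$ into $(\zeta_t)^*(\xi \triangleright g)$ and produces the summand $(\zeta_t)^* \eta(f, \xi \triangleright g)$; a symmetric manipulation using the hypothesis on $f$ produces $(\zeta_t)^* \eta(\xi \triangleright f, g)$. These two summands match precisely the Leibniz expansion of $\xi \triangleright \eta(f, g)$ given by \eqref{equation:def_triangle}.

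The principal obstacle is reconciling the analytic term arising from differentiating the \emph{outer} pullback $(\zeta_t)^*$, of the form $(\zeta_t)^*\bigl[[f, \tau^*(\zeta_t)^* g], \tau^*(\zeta_t)^*\xi\bigr]$, with the algebraic object $(\zeta_t)^*\eta(\xi \triangleright f, g)$. This rearrangement relies on the two inputs already exploited in the proof of Proposition \ref{proposition:post-Lie}: the Jacobi identity for the algebroid bracket on $\Omega^1(\GG)$ and the vanishing $\{\tau^*\zeta^* u, \tau^*\zeta^* v\}_\omega = 0$ for pull-backs through $\tau^*\zeta^*$. Once this delicate matching is secured, the recursion $\xi^{\triangleright(n+1)} = \xi \triangleright \xi^{\triangleright n}$ and the derivation law \eqref{equation:def_triangle} carry the remaining combinatorics automatically, so that the Taylor expansion becomes a formal consequence of the pre-Lie structure on $J_\xi^\infty$ established in Proposition \ref{proposition:post-Lie}.
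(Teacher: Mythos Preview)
Your strategy is exactly the induction the paper has in mind: its one-line proof (``by induction, in the spirit of \cite{Lundervold11hao}'') unfolds into the master identity $\partial_t\bigl((\zeta_t)^*F\bigr)=(\zeta_t)^*(\xi\triangleright F)$, proven by structural induction on $F\in J_\xi^\infty$, followed by the obvious induction on $n$.

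There is, however, a bookkeeping slip in your inductive step for $F=\eta(f,g)$. The two slots of $\eta$ are \emph{not} symmetric: only $g$ enters through the full pull-back $\tau^*(\zeta_t)^*g$, so only the inner $g$-derivative hits an expression of the form $(\zeta_t)^*g$ and can invoke the inductive hypothesis directly to yield $(\zeta_t)^*\eta(f,\xi\triangleright g)$. The inner $f$-derivative falls on the un-pulled-back $\GG$-form representing $f$, to which your hypothesis on $(\zeta_t)^*f$ does not apply as stated. Consequently your ``two summands'' do not by themselves reproduce the Leibniz expansion, and the outer-pullback term you flag as the principal obstacle is not an \emph{extra} contribution to be absorbed: it must be \emph{combined} with the inner-$f$ term, after the swap $[[f,\tau^*(\zeta_t)^*g],\tau^*(\zeta_t)^*\xi]=[[f,\tau^*(\zeta_t)^*\xi],\tau^*(\zeta_t)^*g]$ furnished by Jacobi and the vanishing $[\tau^*\zeta^*u,\tau^*\zeta^*v]=0$, to produce $(\zeta_t)^*\eta(\xi\triangleright f,g)$. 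The cleanest repair is to strengthen the induction hypothesis to the $\GG$-level identity
\[
\partial_t\hat F_{\zeta_t}+\bigl[\hat F_{\zeta_t},\tau^*(\zeta_t)^*\xi\bigr]=\widehat{(\xi\triangleright F)}_{\zeta_t},
\]
where $\hat F_\zeta\in\Omega^1(\GG)$ is the form underlying $F$ before the outer pull-back; this version is genuinely stable under $F\mapsto\eta(f,g)$ and yields your master identity upon applying $(\zeta_t)^*$ and Lemma~\ref{lemma: time_der}. All the ingredients you list are the right ones; only the order of assembly needs adjusting.
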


\begin{proof}
The result is proven by induction, in the spirit of \cite{Lundervold11hao}.
\end{proof}

\subsection{Butcher series expansion of a solution of Hamilton-Jacobi equation}

The expansion \eqref{equation:postLieexpansion} is concise and simple as each order has only one Taylor term. However, we are left with the computation of the iterations of $\triangleright$. In this section, we further expand the Taylor expansion of \eqref{equation:general_HJ}, relying on a pre-Lie formalism of Butcher trees.

A non-planar Butcher tree in $T$ is a rooted tree defined recursively by 
\[
\forest{b}\in T, \quad (\tau_n,\dots ,\tau_1)_{\forest{b}}\in T, \quad \tau_1,\dots,\tau_n\in T,
\]
where the root is graphically represented at the bottom. $(\tau_n,\dots ,\tau_1)_{\forest{b}}$ denotes the tree with the root ${\forest{b}}$ and the $n$ trees $\tau_n,\dots ,\tau_1$ plugged to the root. By non-planar, we mean that the order of the branches does not matter: for instance, $\forest{b[b,b[b]]}=\forest{b[b[b],b]}$.
We set $T$ the set of non-planar tress and $\TT=\Span_\R(T)$ the real vector space generated by $T.$ The grafting of trees $\curvearrowright$ is defined as a product on $T$ returning the sum of all possibilities (counted with multiplicity) of grafting the root of one tree on the nodes of another tree. For instance:
\[
\forest{b[b]}\curvearrowright\forest{b[b]}=\forest{b[b,b[b]]}+\forest{b[b[b[b]]]},\quad
\forest{b}\curvearrowright\forest{b[b,b]}=\forest{b[b,b,b]}+2\forest{b[b,b[b]]},\quad
\forest{b[b,b]}\curvearrowright\forest{b}=\forest{b[b[b,b]]}.
\]
By extending $\curvearrowright$ linearly on $\TT,$ this defines the pre-Lie algebra $(\TT,\curvearrowright)$ of Butcher trees.
A natural grading on $\TT$ is given by the number of nodes: $\abs{\forest{b[b,b]}}=3$.

The translation between the geometric structure $(J_\xi^\infty,\triangleright)$ and the algebraic structure $(\TT,\curvearrowright)$ is obtained through the elementary differential map. 
\begin{definition}[Elementary differential map]
Let $\xi\in \Omega^1(\GG).$ The elementary differential map $\F^\xi \colon \TT \to J_\xi^\infty$ associated to $\xi$ is defined by 
\[
\F^\xi(\forest{b})= \xi \text{ and for } \zeta\in \Omega^1(M), \;  \F^\xi \Bigl( (\tau_n,\dots ,\tau_1)_{\forest{b}} \Bigr) ( \zeta )=\zeta^* [[\dots[\xi,\pi^*  \F^\xi \bigl(\tau_1 \bigr)(\zeta)],\dots],\pi^* \F^\xi \bigl( \tau_n \bigr) (\zeta)].
\]
\end{definition}
The following result is a straightforward consequence of the definition of the product $\triangleright$.
\begin{proposition}
The elementary differential $\F^\xi\colon J_\xi^\infty\rightarrow \TT$ is a pre-Lie algebra morphism:
\[
%\label{equation:F_morphism}
\F^\xi(\tau_2\curvearrowright \tau_1)=\F^\xi(\tau_2)\triangleright \F^\xi(\tau_1).
\]
\end{proposition}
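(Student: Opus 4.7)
The plan is to establish the morphism identity by induction on the number of nodes of $\tau_1$, combining the recursive structure of the grafting $\curvearrowright$ with that of the elementary differential $\F^\xi$, and exploiting the Leibniz clause in the definition \eqref{equation:def_triangle} of $\triangleright$.

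First I would record the two recursions that drive the argument. Writing $\tau_1 = (\sigma_n, \dots, \sigma_1)_{\forest{b}}$ with root $\forest{b}$ and children $\sigma_1, \dots, \sigma_n$, the grafting unfolds as
\[
\tau_2 \curvearrowright \tau_1 = (\sigma_n, \dots, \sigma_1, \tau_2)_{\forest{b}} + \sum_{i=1}^n (\sigma_n, \dots, \tau_2 \curvearrowright \sigma_i, \dots, \sigma_1)_{\forest{b}},
\]
depending on whether $\tau_2$ is grafted as a new child of the root or inside one of the subtrees $\sigma_i$. On the other side, the iterated bracket defining $\F^\xi$ gives the recursion
\[
\F^\xi\bigl((\sigma_n, \dots, \sigma_1)_{\forest{b}}\bigr) = \eta\bigl(\F^\xi((\sigma_{n-1}, \dots, \sigma_1)_{\forest{b}}),\, \F^\xi(\sigma_n)\bigr), \qquad \F^\xi(\forest{b}) = \xi.
\]

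For the base case $\tau_1 = \forest{b}$, one has $\tau_2 \curvearrowright \forest{b} = (\tau_2)_{\forest{b}}$, and the first clause of \eqref{equation:def_triangle} gives directly $\F^\xi(\tau_2) \triangleright \xi = \eta(\xi, \F^\xi(\tau_2)) = \F^\xi((\tau_2)_{\forest{b}})$. For the inductive step I would apply $\F^\xi(\tau_2) \triangleright$ to the nested $\eta$-expression for $\F^\xi(\tau_1)$ and distribute using the Leibniz clause of \eqref{equation:def_triangle} repeatedly. The derivation eventually acts either on the innermost $\xi$, producing $\F^\xi(\tau_2) \triangleright \xi = \eta(\xi, \F^\xi(\tau_2))$ which, nested inside the outer $\eta$-layers, reconstructs $\F^\xi((\sigma_n, \dots, \sigma_1, \tau_2)_{\forest{b}})$; or on one of the factors $\F^\xi(\sigma_i)$, yielding $\F^\xi(\tau_2) \triangleright \F^\xi(\sigma_i) = \F^\xi(\tau_2 \curvearrowright \sigma_i)$ by the inductive hypothesis, since $\sigma_i$ has strictly fewer nodes than $\tau_1$. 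Summing the two families of contributions reproduces exactly the recursive expression for $\F^\xi(\tau_2 \curvearrowright \tau_1)$, closing the induction.

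The only subtle point I anticipate is reconciling the left-nested ordering implicit in the $\eta$-formula for $\F^\xi$ with the symmetry of non-planar trees in the children of the root. This is handled by the same ingredients used in the proof of Proposition \ref{proposition:post-Lie}: the Jacobi identity together with the vanishing $\{\tau^*\zeta^*f,\tau^*\zeta^*g\}_{\pi} = 0$ for $f,g \in J_\xi^\infty$ ensures that two nestings differing by a transposition of the outer arguments agree on $J_\xi^\infty$, so the formal bookkeeping used in the induction is compatible with the non-planar tree convention and the morphism identity \eqref{equation:F_morphism} follows.
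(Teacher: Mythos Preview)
Your argument is correct and is precisely the natural unfolding of what the paper leaves as ``a straightforward consequence of the definition of the product $\triangleright$'': the paper gives no detailed proof, and your induction on $|\tau_1|$, matching the Leibniz clause of \eqref{equation:def_triangle} against the recursive decomposition of $\curvearrowright$, is exactly the intended mechanism. Your remark on the well-definedness of $\F^\xi$ over non-planar trees (Jacobi plus the vanishing of $[\tau^*\zeta^*f,\tau^*\zeta^*g]$, as in the proof of Proposition~\ref{proposition:post-Lie}) is a useful addition that the paper itself leaves implicit.
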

This morphism allows to transport Proposition \ref{proposition:HJ_expansion} and to rewrite it naturally in terms of trees.
Let $(\zeta_t)_{t}$ be the solution of the Hamilton-Jacobi equation \eqref{equation:general_HJ}. Then, its expansion satisfies
\begin{align*}
%\label{equation:postLie_trees}
\zeta_t&=\F^\xi \Bigl(\exp^\curvearrowright(t\forest{b})\Bigr)(\zeta_0)\\
&= \sum_{n=0}^\infty \frac{t^n}{n!}\F^\xi \Bigl(\forest{b}^{\curvearrowright n}\Bigr)(\zeta_0)\\
&=\F^\xi\Big(\id+t\forest{b}+\frac{t^2}{2}\forest{b}\curvearrowright \forest{b}+\frac{t^3}{3!}\forest{b}\curvearrowright(\forest{b}\curvearrowright \forest{b})+\frac{t^4}{4!}\forest{b}\curvearrowright(\forest{b}\curvearrowright(\forest{b}\curvearrowright \forest{b}))+\dots\Big)(\zeta_0).
\end{align*}

Now, we use trees to encode explicitly the expansion $\exp^\curvearrowright.$ The appropriate concept for such formal expansions is the one of Butcher series, often called B-series. 
\begin{definition}[\cite{Butcher21bsa}]
The B-series associated to $\xi \in \Omega^1(\GG)$ is the following formal power series indexed by a coefficient map $a\in\TT^*$:
\[
B^\xi \colon
\begin{array}{ccc}
\TT^* & \to & J_\xi^\infty\\
a & \mapsto & \sum_{\tau\in T} \frac{a(\tau)}{\sigma(\tau)} \F^\xi(\tau)
\end{array}
\]
where $\sigma(\tau)$ is the number of graph automorphisms of $\tau$, also called the symmetry coefficient.
\end{definition}
We refer to \cite[Sec.\ts III]{Hairer06gni} for an explicit formula of the symmetry coefficient. Note that for any $a \in \TT^*,  B^{t \xi}(a) \in J_\xi^\infty \bigl[ [t] \bigr].$ Then, the Butcher series of the solution of a Hamilton-Jacobi equation is the following.
\begin{proposition}
\label{proposition:HJButcher}
Let $(\zeta_t)_t \in (\Omega^1(M))^I$ be the solution of the Hamilton-Jacobi equation \eqref{equation:general_HJ}. Then its Taylor expansion is given by
\begin{equation}\label{eq:HJButcher}
\zeta_0 + B^{t\xi}\bigl(e\bigr) (\zeta_0) \in \Omega^1(M) \bigl[ [t] \bigr],
\end{equation}
where $e \in \TT^*$ is given by
\begin{equation}\label{eq:e}
e(\forest{b})=1,\quad e(\tau)=\frac{1}{\abs{\tau}}e(\tau_1)\dots e(\tau_n),\quad \tau= (\tau_n,\dots ,\tau_1)_{\forest{b}}.
\end{equation}
\end{proposition}

Using Section \ref{sec:jets_bislag}, this proposition is interpreted in terms of jets. Let us assume that $\xi = \alpha^* \theta$ for some $\theta \in \Omega^1(M)$ and $(\zeta_t)_{t \in I} \in \Omega^1(M)^I$ the solution of the equation \eqref{eq:Ham_jac_1form}. Then, the jet of the smooth family of bisections $(\Graph(\zeta_t)_{t \in I})$ is given by the series $\zeta_0+B^{t\xi}(e)(\zeta_0)$ in $\Omega^1(M) \bigl[ [t] \bigr].$ Since $B^{t\xi}(e) \in   J_\xi^\infty \bigl[ [t] \bigr]$. This proposition also tells us what $J_\xi^\infty$ stands for. Namely, it is the space of Taylor coefficients of a formal solution of the Hamilton-Jacobi equation.

The representation of the Hamilton-Jacobi flow with trees allows us to conveniently provide an explicit expansion of $\zeta_t$ at any order. These calculations are called Farmer series in \cite{Cosserat23tac}, where Hamiltonian Poisson integrators were implemented. Our algebraic formalism simplifies greatly the tedious calculations in \cite{Cosserat23tac}.% and allows to identify the degeneracies for specific choice of forms $\xi$ (see Section \ref{sec:simpl}).
Using our construction of the appropriate Butcher series provided by Proposition \ref{proposition:HJButcher}, we find directly
\begin{align*}
\zeta_t&=\F^\xi\Big(
\id
+t\forest{b}
+\frac{t^2}{2}\forest{b[b]}
+\frac{t^3}{3!}\big(\forest{b[b[b]]}+\forest{b[b,b]}\big)
+\frac{t^4}{4!}\big(\forest{b[b[b[b]]]}+\forest{b[b[b,b]]}+3\forest{b[b,b[b]]}+\forest{b[b,b,b]}\big)
+\dots\Big) (\zeta_0)\\
&=\zeta_0 +  \zeta_0^*\Big(
t\xi
+\frac{t^2}{2}[\xi,\pi^*\zeta_0^*\xi]
+\frac{t^3}{3!}\big([\xi,\pi^*\zeta_0^*[\xi,\pi^*\zeta_0^*\xi]]+[[\xi,\pi^*\zeta_0^*\xi],\pi^*\zeta_0^*\xi]\big)\\&
+\frac{t^4}{4!}\big([\xi,\pi^*\zeta_0^*[\xi,\pi^*\zeta_0^*[\xi,\pi^*\zeta_0^*\xi]]]+[\xi,\pi^*\zeta_0^*[[\xi,\pi^*\zeta_0^*\xi],\pi^*\zeta_0^*\xi]]\\&+3[[\xi,\pi^*\zeta_0^*[\xi,\pi^*\zeta_0^*\xi]],\pi^*\zeta_0^*\xi]+[[[\xi,\pi^*\zeta_0^*\xi],\pi^*\zeta_0^*\xi],\pi^*\zeta_0^*\xi]\big)
+\dots\Big).
\end{align*}
Note that we recover in particular the equations \eqref{eq:der_order2} and \eqref{eq:der_order3}.

\subsection{Occurence of degeneracies}
\label{sec:simpl}

As explicit formulas for birealisations are rarely available, numerical approximations often rely on approximate birealisation, such as the Karasev birealisation \cite{Karasev1987, cabrera2024}.
Such construction brings degeneracies in our Taylor expansions, for which we provide a detailed description here.

Following \cite{Cosserat23tac}, let $\theta \in \Omega^1(M)$, $\xi=\alpha^* \theta$ and $\zeta_0=0$. In this section, we assume further the following degeneracy for the source map $\alpha$:
\begin{equation}
\label{eq:degen}
 0^*[\alpha^* \theta,\pi^* \theta]=0.
\end{equation}
\begin{remark}
Let us pick cotangent coordinates $(q,p)$ on $T^*M$. Degeneracy \eqref{eq:degen} occurs, for instance, when the differential of $\alpha$ along fibers of $T^*M$ at zero $(\partial_{p_j} \alpha_i (q, 0))_{1 \leq i,j \leq n}$ is anti-symmetric. In the Karasev birealisation, for all $1 \leq i,j \leq n,$ $\partial_{p_j} \alpha_i (q, 0) = \{ q_i, q_j \}.$ An interested reader might consult \cite[eq. (3.6)]{dherin2016} for more insights on this particular case.
\end{remark}
This degeneracy implies that the differential associated to some specific trees vanishes, in the spirit of superconvergence (see, e.g., \cite{jay1996}).
\begin{theorem}
\label{thm:superconvergence}
Let $T_0 \subset T$ the subset of trees that do not have a descendant of the form $\forest{b[b]}.$
Let $(\zeta_t)_t \in (\Omega^1(M))^I$ be the solution of the Hamilton-Jacobi equation \eqref{eq:Ham_jac_1form_intial0} under the degeneracy condition \eqref{eq:degen}, with initial condition $\zeta_0=0$. Then, its Taylor expansion is given by the formal series with coefficients being closed forms
\[
%\label{eq:HJButcher_degen}
B^{t  \alpha^* \theta}\bigl(e_0 \bigr) (0)= \sum_{\tau\in T_0} \frac{e(\tau)}{\sigma(\tau)} \F^{t \alpha^* \theta}(\tau)(0)\in \Omega^1_0(M) \bigl[ [t] \bigr],
\]
where $e_0 \in \TT^*$ is given by $e_0(\tau)=e(\tau) \ind_{T_0}$ and $e$ satisfies \eqref{eq:e}.
\end{theorem}

We illustrate Theorem \ref{thm:superconvergence} in the case $\theta = d H$. Indeed, we are now able to compute straightforwardly the Farmer series of \cite{Cosserat23tac} at high order. Let $(S_t)_t \in \func{M \times I}$ be a generating function of a Hamiltonian $H \in \func{M}$ provided by equation \eqref{eq:Ham_jac_exact}. Let us set $\zeta_t = \text{d}S_t$ for $t \in I.$ We write the expansion of $\zeta$ up to order $5:$
\vskip-5ex
\begin{align}
\zeta_t&=\F^{\alpha^* dH}\Big(
\id
+t\forest{b}
+\frac{t^3}{3!}\forest{b[b,b]}
+\frac{t^4}{4!}\big(\forest{b[b[b,b]]}+\forest{b[b,b,b]}\big)
+\frac{t^5}{5!}\big(\forest{b[b[b[b,b]]]}+\forest{b[b[b,b,b]]}+4\forest{b[b,b[b,b]]}+\forest{b[b,b,b,b]}\big)
+\dots\Big)(0) \nonumber \\
\label{equation:true_expansion_HJ}
&=t\text{d}H
+\frac{t^3}{3!} 0^*[[\alpha^* \text{d}H,\pi^*  \text{d}H],\pi^*  \text{d}H] \nonumber \\&
+\frac{t^4}{4!}\big(0^*[\alpha^* \text{d}H,\pi^*0^*[[\alpha^* \text{d}H,\pi^*  \text{d}H],\pi^*  \text{d}H]] +0^*[[[\alpha^* \text{d}H,\pi^*  \text{d}H],\pi^*  \text{d}H],\pi^*  \text{d}H]\big) \\&
+\frac{t^5}{5!}\big(
0^*[\alpha^* \text{d}H,\pi^*0^*[\alpha^* \text{d}H,\pi^*0^*[[\alpha^* \text{d}H,\pi^*  \text{d}H],\pi^*  \text{d}H]]] \nonumber \\&
+0^*[\alpha^* \text{d}H,\pi^*0^*[[[\alpha^* \text{d}H,\pi^*  \text{d}H],\pi^*  \text{d}H],\pi^*  \text{d}H]] \nonumber \\&
+4\cdot 0^*[[\alpha^* \text{d}H,\pi^*  0^*[[\alpha^* \text{d}H,\pi^*  \text{d}H],\pi^*  \text{d}H]],\pi^*  \text{d}H] \nonumber \\&
+0^*[[[[\alpha^* \text{d}H,\pi^*  \text{d}H],\pi^*  \text{d}H],\pi^*  \text{d}H],\pi^*  \text{d}H]\big)
+\dots \nonumber 
\end{align}

\section{Composition of Lagrangian bisections and B-series}\label{sec:comp}

In this section, we use Butcher series theory to provide a combinatorial description of the group law of jets of bisections. More precisely, we prove that the product of jets of bisections is encoded by the composition of B-series and the Butcher-Connes-Kreimer Hopf algebra in the sense of Theorem \ref{thm:somewhat}. The occurence of this Hopf algebra in the framework of symplectic groupoids is new. Our analysis relies heavily on the various notions constructed in Section \ref{sec:jets_bislag}.

Let us consider the symmetric tensor algebra $(S(\TT),\cdot)$ over trees, that is the vector space spanned by forests, with its unit being the empty forest $\textbf{1}$.
Let the Butcher-Connes-Kreimer coproduct $\Delta_{BCK}\colon \TT\rightarrow\TT\otimes \TT$ be given by
\[
\Delta_{BCK}(\tau)=\sum_{s\subset\tau} (\tau\setminus s)\otimes s,
\]
where the sum is indexed on all the subtrees of $\tau$ that contain the root (including the empty tree).
One finds for instance
\begin{align}
\Delta_{BCK}(\forest{b[b[b]]})&=
\textbf{1} \otimes \forest{b[b[b]]}
+\forest{b} \otimes \forest{b[b]}
+\forest{b[b]} \otimes \forest{b}
+\forest{b[b[b]]} \otimes \textbf{1} \nonumber\\
\label{equation:BCK_ex}
\Delta_{BCK}(\forest{b[b,b[b]]})&=
\textbf{1} \otimes \forest{b[b,b[b]]}
+\forest{b} \otimes \forest{b[b,b]}
+\forest{b} \otimes \forest{b[b[b]]}
+\forest{b,b} \otimes \forest{b[b]}
+\forest{b[b]} \otimes \forest{b[b]}
+\forest{b,b[b]} \otimes \forest{b}
+\forest{b[b,b[b]]} \otimes \textbf{1}.
\end{align}
The coproduct is extended on $S(\TT)$ by multiplicativity: for any $\tau_1, \tau_2 \in \TT,$ $\Delta_{BCK}(\tau_1\cdot \tau_2)=\Delta_{BCK}(\tau_1)\cdot \Delta_{BCK}(\tau_2)$.
Then, it is well-known that $(S(\TT),\textbf{1},\cdot,\textbf{1}^*, \Delta_{BCK})$ yields the BCK Hopf algebra \cite{Connes98har,Foissyith}, used in particular to represent differential operators.

We call character a linear form $a\in S(\TT)^*$ that satisfies $a(\tau_1\cdot\tau_2)=a(\tau_1) a(\tau_2)$ and we denote the product $\mu\colon \TT\otimes \TT\rightarrow \TT$. Note that given $a\in \TT^*$, there is a unique way to extend $a$ as a character on $S(\TT)$.

The composition of two B-series is detailed by the BCK Hopf algebra \cite{Chartier10aso}. Note that it applies for any Taylor expansion over $J^\infty_\xi$, not just for the exact Hamilton-Jacobi flow.
%\begin{proposition}\label{prop:composition}
%Let $\xi \in \Omega^1(\GG)$ and $a_1,$ $a_2 \in \TT^*.$ Let us set $B^\xi(a^1)$ and $B^\xi(a^2)$ their B-series respectively. Their composition is the B-series
%\[
%(B^\xi(a^1))^* B^\xi(a^2)=B^\xi(a^1* a^2),\quad a^1* a^2=\mu\circ (a^1\otimes a^2) \circ \Delta_{BCK},
%\]
%where $a_1$ is extended as a character over $S(\TT)$ and $*$ is called the composition law.
%In addition, the set $G_B=\{a\in \TT^*, a(\forest{b})=1\}$, equipped with $*$ forms a group, called the Butcher group, with unit $\delta_{\forest{b}}$.
%\end{proposition}
\begin{proposition}
For any $a_1,$ $a_2 \in \TT^*,$ we set
\[
a^1* a^2=\mu\circ (a^1\otimes a^2) \circ \Delta_{BCK}.
\]
This turns the set $G_B=\{a\in \TT^*, a(\forest{b})=1\}$, equipped with $*,$ into a group, called the Butcher group, with unit $\delta_{\forest{b}}$.
\end{proposition}
From the example \eqref{equation:BCK_ex}, we find, for instance,
\begin{align*}
a^1* a^2(\forest{b[b,b[b]]})&=
a_2(\forest{b[b,b[b]]})
+a_1(\forest{b}) a_2(\forest{b[b,b]})
+a_1(\forest{b}) a_2(\forest{b[b[b]]})
+a_1(\forest{b})^2 a_2(\forest{b[b]})\\&
+a_1(\forest{b[b]}) a_2(\forest{b[b]})
+a_1(\forest{b}) a_1(\forest{b[b]}) a_2(\forest{b})
+a_1(\forest{b[b,b[b]]})
\end{align*}

The product on $G_B$ defines by duality a product on Butcher series: for any $\xi \in \Omega^1(\GG)$ and $a_1,$ $a_2 \in \TT^*,$ denoting by $B^\xi(a^1)$ and $B^\xi(a^2)$ their B-series respectively, their composition is defined by
\begin{equation}\label{eq:comp_Bseries}
B^\xi(a^1) \circ B^\xi(a^2) :=B^\xi(a^1* a^2).
\end{equation}
The following proposition is the analog of \cite[Thm 3.1]{Chartier10aso} in our context. We use the terminology introduced in the example \ref{ex:jet_bis}, meaning that for any $(\zeta_t)_{t \in I} \in \Omega^1(M)^I$ such that $\zeta_t \in \Omega^{1, \GG}(M)$ for all $t \in I,$ we identify the jet of the smooth family of bisections $\Bigl(\Graph(\zeta_t)\Bigr)_{t \in I}$ with the Taylor series of $(\zeta_t)_t$ with respect to $t.$
\begin{proposition}\label{prop:composition}
Let $\theta \in \Omega^1(M)$ and $a^1, a^2 \in \TT^*.$ We set $\mathcal{J}^{L^1} \in \mathbb{B}$ and $\mathcal{J}^{L^2} \in \mathbb{B}$ to be the jets of bisections corresponding to the graphs of $B^{t \cdot \alpha^* \theta}\bigl(a^1\bigr)$ and $B^{t \cdot \alpha^* \theta}\bigl(a^2\bigr)$ respectively. Then, the Butcher series, evaluated at $0 \in \Omega^{1, \GG}(M)$, $\Bigl(B^{t \cdot \alpha^* \theta}(a^1) \circ B^{t \cdot \alpha^* \theta}(a^2)\Bigr)(0)$ is the jet of bisections $\mathcal{J}^{L^1} \cdot \mathcal{J}^{L^2} \in \mathbb{B}.$
\end{proposition}

The following theorem relates this proposition with equation \eqref{eq:comp_bislag} and states that the Butcher group encodes the product of jets of bisections in a local symplectic groupoid.
\begin{theorem}\label{thm:somewhat}
Let $\xi \in \Omega^1(\GG)$ and the map $\Psi^\xi \colon G_B \rightarrow \Omega^1(M)\bigl[ [ t ] \bigr]$ be given by
\[
\Psi^\xi(a)=B^{t\xi}\bigl(a\bigr)(0).
\]
Then:
\begin{itemize}
\item For any $\theta \in \Omega^1(M),$ $\Psi^{\alpha^*\theta} \colon (G_B,*) \hookrightarrow (\mathbb{B}, \cdot)$ is an injective group morphism.
\item For any $\theta \in \Omega_0^1(M),$ $\Psi^{\alpha^*\theta}\colon (G_B,*) \hookrightarrow (\overline{\mathbb{L}}, \cdot)$ is an injective group morphism.
\item For any $H \in \func{M},$ $\Psi^{\alpha^*dH}\colon (G_B,*) \hookrightarrow (\mathbb{L}, \cdot)$ is an injective group morphism.
\end{itemize}
\end{theorem}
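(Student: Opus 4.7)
The three items of the theorem share the same structure, and I shall prove them in parallel: well-definedness of the image in the target subgroup of jets of bisections, the group morphism property, and injectivity.

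For well-definedness, the normalisation $a(\forest{b})=1$ for $a\in G_B$ implies that $B^{t\xi}(a)=t\xi+O(t^2)$, so $\Psi^\xi(a)\in \Omega^1(M)\bigl[[t]\bigr]$ vanishes at $t=0$. By Borel's lemma it is the Taylor jet of a smooth family $(\zeta_t)$ of $1$-forms, whose graphs lie close to the zero section and so form a smooth family of bisections of $\GG$: this yields item (i). For item (ii), I argue by induction on the number of nodes of $\tau$ that each $\F^{\alpha^*\theta}(\tau)$ is a closed $1$-form whenever $\theta$ is closed, using that the algebroid bracket $[\cdot,\cdot]$ preserves closedness (Cartan's formula applied to each of the three summands in its definition shows them to be exact, hence closed). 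For item (iii), the same induction with $\theta=dH$ gives exactness instead of closedness, since the bracket of two exact $1$-forms is exact.

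The morphism property is the heart of the proof. Given $a^1,a^2\in G_B$, set $\zeta^i=\Psi^\xi(a^i)$; the goal is to show that the $1$-form $\zeta^{12}$ whose graph is the groupoid product $\Graph(\zeta^1)\cdot \Graph(\zeta^2)$ satisfies $\zeta^{12}=\Psi^\xi(a^1*a^2)$. The strategy combines three ingredients: Corollary \ref{cor:Ham_jac_k} applied at every finite order realises each $\zeta^i$ as the Hamilton-Jacobi flow of some time-dependent $1$-form $\tilde\theta^i_t$ on $M$; equation \eqref{eq:comp_bislag} identifies the groupoid product of the two bisections with the composition of their induced base diffeomorphisms; and Proposition \ref{prop:composition} asserts that on the B-series side the pullback composition $(B^{t\xi}(a^1))^*B^{t\xi}(a^2)$ equals $B^{t\xi}(a^1*a^2)$. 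Applying $0^*$ to this last identity and translating the pullback composition via the first two ingredients yields the required jet-level equality. An alternative, purely algebraic route goes through the pre-Lie morphism \eqref{equation:F_morphism} to lift the comparison from $J_\xi^\infty$ to $\TT$ and reduce it to the BCK coproduct computation.

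For injectivity, if $\Psi^\xi(a)=\Psi^\xi(a')$ then extracting the coefficient of $t^n$ gives, for each $n$, a linear relation among the elementary differentials $0^*\F^{\alpha^*\theta}(\tau)$ with $|\tau|=n$; the usual linear independence of these objects for generic $\theta$ (respectively $H$) forces $a=a'$, with the minor caveat for item (iii) that one must work modulo the vanishings identified in Lemma \ref{lemma:superconvergence}. The main obstacle I anticipate is the morphism step, because Proposition \ref{prop:composition} is an abstract B-series statement whereas the product in $\mathbb{B}$ comes from the concrete groupoid multiplication, with nonzero time-dependent initial data appearing at each successive composition; the cleanest bridge, in my view, is to first prove a time-dependent version of Proposition \ref{proposition:HJButcher}, identify the composed Hamilton-Jacobi dynamics with a B-series using the pre-Lie identity of Proposition \ref{proposition:post-Lie}, and then apply Proposition \ref{prop:composition} to conclude.
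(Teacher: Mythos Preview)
Your three-step architecture (well-definedness, morphism, injectivity) is exactly the paper's; the paper's proof is a one-liner pointing to Section~\ref{sec:preli_mais_pas_pre_lie} for well-definedness, Proposition~\ref{prop:composition} for the morphism, and ``the definition of a Butcher series'' for injectivity. Your elaborations on well-definedness (Borel realisation, inductive closedness/exactness of the $\F^{\alpha^*\theta}(\tau)$ via the algebroid bracket) are correct and fill in details the paper leaves implicit.

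Where you go further than the paper is precisely where it is most elliptic. You are right that Proposition~\ref{prop:composition} is a statement about the pullback composition $(B^\xi(a^1))^*B^\xi(a^2)$ of B-series, while the product on $\mathbb{B}$ is the groupoid multiplication of bisections; the paper silently identifies the two. The bridge you propose (realise each B-series as a Hamilton-Jacobi evolution with possibly nonzero initial datum, use equation~\eqref{eq:comp_bislag}, and compare with Proposition~\ref{prop:composition}) is the natural way to make that identification rigorous, and is not in the paper. So on this point you are not missing anything the paper supplies; you are supplying something the paper omits.

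One caution on injectivity: you correctly flag that linear independence of the $0^*\F^{\alpha^*\theta}(\tau)$ is a genericity statement. The paper asserts injectivity for \emph{every} $\theta$ (resp.\ $H$), which fails as stated (take $\theta=0$); your formulation ``for generic $\theta$'' is the honest one. Also, your parenthetical about working modulo the vanishings of Lemma~\ref{lemma:superconvergence} would, if taken literally, contradict injectivity in the exact case as well, so do not quotient by those relations --- the point is rather that for generic $H$ those are the \emph{only} relations and they already vanish on the nose, so the remaining trees give independent differentials.
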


\begin{proof}
The maps are well defined thanks to Section \ref{sec:preli_mais_pas_pre_lie}. The group morphism properties are provided by Proposition \ref{prop:composition} and their injectivity follows from the definition of a Butcher series. 
\end{proof}

We considered for simplicity jet spaces generated by one form $\xi$ (see also remark \ref{rem:eval_morphism}) and our Butcher series are devoted to the approximation of one chosen smooth family of bisections. It is worth mentioning that one can extend the previous formalism for flows driven by a finite amount of forms $\xi_1$,\dots, $\xi_n \in \Omega^1(\GG)$ by using decorated nodes. We sketch the construction and illustrate it here for $n=2$.
The jet space of the section \ref{sec:pre_lie} becomes $J^\infty_{\xi_1,\dots,\xi_n} \subset \mathcal{E}$ and is represented by the algebra of decorated trees spanned by $\forest{b_1}$, \dots, $\forest{b_n}$. The previously described algebra adapts in this setting, in the spirit of P-series for partitioned problems \cite{Hairer06gni}.

For instance, let us consider $J^\infty_{\xi_1,\xi_2} \in \mathcal{E}$ and bi-coloured trees where $\forest{b}$ stands for $\xi_1 \in \Omega^1(\GG)$ and $\forest{w}$ stands for $\xi_2 \in \Omega^1(\GG)$.
Let us compose the B-series $B^{t\xi_1}(\delta_{\forest{b}}) \in J^{\infty}_{\xi_1}$ and $B^{t\xi_2}(\delta_{\forest{w}}) \in J^{\infty}_{\xi_2}$. The composition $B^{t\xi_2}(\delta_{\forest{w}}) \circ B^{t\xi_1}(\delta_{\forest{b}}) \in J^{\infty}_{\xi_1, \xi_2}$ is computed with the BCK Hopf algebra as in the equation \eqref{eq:comp_Bseries} and we find
\begin{align*}
\Bigl(B^{t\xi_2}(\delta_{\forest{w}}) \circ B^{t\xi_1}(\delta_{\forest{b}})\Bigr)(\zeta_0)
&=B^{t\xi_1,t\xi_2}\Bigl(\delta_{\forest{w}}*\delta_{\forest{b}}\Bigr)(\zeta_0)\\
&=\F^{t\xi_1,t\xi_2}\Big(
\forest{w}
+\forest{b}
+\forest{b[w]}
+\frac{1}{2!}\forest{b[w,w]}
+\frac{1}{3!}\forest{b[w,w,w]}
+\frac{1}{4!}\forest{b[w,w,w,w]}
+\dots
\Big)(\zeta_0)\\
&=\zeta_0^*\Big(
t(\xi_1+\xi_2)
+t^2 [\xi_1,\pi^*\zeta_0^* \xi_2]
+\frac{t^3}{2!}[[\xi_1,\pi^*\zeta_0^* \xi_2],\pi^*\zeta_0^* \xi_2]
+\dots
\Big).
\end{align*}

\section{Numerical methods for Hamiltonian systems on Poisson manifolds}\label{sec:num}

Using the pre-Lie structure of Butcher trees, we present a new class of Hamiltonian Poisson integrators based on Taylor and Runge-Kutta discretisations. We discretize the dynamics of the equation \eqref{eq:Ham_eq} generated by a Hamiltonian $H \in \func{M}.$ Therefore, we are interested in this section in the previous constructions with $\xi = \alpha^* dH \in \Omega^1(\GG)$, the initial condition $\zeta_0 = 0 \in \Omega^{1, \GG}(M)$, with and without the degeneracy condition \eqref{eq:degen}.

We emphasize that the framework of pre-Lie algebras and Butcher series is a central tool for the numerical integration of ODEs in $\R^d$. The surprising effect of the birealisations is that they translate a Poisson geometry into a simpler geometry on $\Omega^1(M)$, where one can apply numerical tools similar to the Euclidean context.

\subsection{Hamiltonian Poisson integrators by truncation of Taylor series}

We want to obtain Hamiltonian Poisson integrators of arbitrary order $N.$ Let us denote $T^N$ (resp.\ts $T_0^N$) the subset of $T$ (resp.\ts $T_0$) containing trees of order at most $N$. Let $e^N(\tau)=e(\tau)\ind_{\tau\in T^N}$ and $e_0^N(\tau)=e(\tau)\ind_{\tau\in T_0^N}$ be restrictions of the coefficient maps of Proposition \ref{proposition:HJButcher}, that truncate the formal series of the previous sections into finite sums.
For instance, for $t$ small enough, the B-series of the equation \eqref{eq:HJButcher} becomes a locally well-defined $1$-form\footnote{We can get rid of the issue of constructing a globally well-defined $1$-form by considering the time-step as an infinitesimal parameter.}: $B^{t\alpha^* dH}\bigl( e^N \bigr) (0) \in \Omega^1(M).$ Evaluating this $1$-form at any $x \in M,$ we obtain a well-defined point $B^{t\alpha^* dH}_{x}\bigl( e^N \bigr) (0) \in \GG$ (and analogously for $B^{t\alpha^* dH}_{x}\bigl( e_0^N \bigr)$). Based on truncations of the expansion \eqref{equation:true_expansion_HJ} of the Hamilton-Jacobi flow \eqref{equation:general_HJ}, we construct numerical methods for this flow and, in turn, approximate the Hamiltonian dynamics on $M$. The following theorem is devoted to approximate the dynamics generated by a Hamiltonian $H \in \func{M}.$

\begin{theorem}
\label{thm:THP}
The following Taylor-Hamiltonian-Poisson integrator is of order $N$ for solving equation \eqref{eq:Ham_eq}:
\begin{align}
\label{eq:inter_pt1}
y_n&=\alpha\Bigl(B_{x_n}^{\Delta t \cdot  \alpha^* dH}\bigl(e^N\bigr)(0)\Bigr),\\
\label{eq:inter_pt2}
y_{n+1}&=\beta\Bigl(B_{x_n}^{\Delta t \cdot  \alpha^* dH}\bigl(e^N\bigr)(0)\Bigr),
\end{align} 
where $\Delta t$ is the timestep of the method,
\[
B^{\Delta t \cdot  \alpha^* dH}\bigl(e^N\bigr)(0) \colon 
\begin{array}{ccc} 
M & \to & \GG \\
x & \mapsto & B_{x}^{\Delta t \cdot  \alpha^* dH}\bigl(e^N\bigr)(0)
\end{array}
\]
is the map provided by the Butcher series associated to $\alpha^* dH$, and $x_n \in M$ denotes an intermediary point implicitly defined by equation \eqref{eq:inter_pt1}.
In addition, under the degeneracy condition \eqref{eq:degen}, replacing $e^N$ by $e_0^N$ in \eqref{eq:inter_pt1}-\eqref{eq:inter_pt2} yields a method of order $N$.
\end{theorem}

By their very constructions, these methods are Hamiltonian Poisson integrators: they follow the flow of some time-dependent Hamiltonian. Therefore, they stay on a symplectic leaf and preserve any Casimir all along a trajectory. The following method has been benchmarked in \cite[Sec. 5.2.]{Oscar2022} on a Lotka-Volterra system in a neighborhood of a singularity.

\begin{ex}[Euler method]
The simplest method is the Euler method, given by Theorem \ref{thm:THP} for $N=1$. It is of first order and the associated iteration is
\[
y_n = \alpha(\Delta t \cdot d_{x_n} H),\quad
y_{n+1} = \beta(\Delta t \cdot  d_{x_n} H).
\]
Under the degeneracy condition \eqref{eq:degen}, this method is of order $N=2$.
\end{ex}

In \cite{Oscar2022}, a Hamiltonian Poisson integrator of order $2$ is benchmarked on the rigid body dynamics. Theorem \ref{thm:THP} provides a general framework for extending these methods to high order.

\subsection{Runge-Kutta approach to Hamiltonian Poisson integrators}

The number of trees in $\TT^N$ and $\TT_0^N$ blows up quickly as the order $N$ gets larger, which makes the Taylor approach computationally expensive and often unstable.
As a solution, we propose the following class of Runge-Kutta-Hamiltonian-Poisson (RKHP) approximations for the high-order approximation of equation \eqref{eq:Ham_eq}.
\begin{align}
Z_i&=\Delta t \sum_{1 \leq j<i  } a_{ij} \cdot (Z_j^*\alpha^*dH), \quad 1 \leq i \leq s \nonumber\\
\psi^{\Delta t \cdot \xi}&= \Delta t \sum_{i=1}^s b_i \cdot  (Z_i^*\alpha^*dH) \in d \func{M}, \nonumber\\
\label{equation:RK_HJ}
y_n&=\alpha\Bigl(\psi^{\Delta t \cdot \alpha^*dH}_{x_n} \Bigr) \in M\\
y_{n+1}&=\beta\Bigr(\psi^{\Delta t \cdot \alpha^*dH}_{x_n} \Bigr) \in M. \nonumber
\end{align}
The $a_{ij} \in \mathbb{R}$ and $b_i \in \mathbb{R},$ $1 \leq i,j \leq s,$ are the coefficients of the method and shall be chosen in order to reach high order of accuracy with a small number of intermediate stages $s \in \mathbb{N}$.
For simplicity, we consider explicit integrators, i.e., $a_{ij}=0$ for $j\geq i$.
The $(Z_i)_{1 \leq i \leq s} \in \Omega^{1, \GG}(M)$ are all exact $1$-forms on $M$ that are explicitly defined by the first equation line. For any $\Delta t$ small enough, note that the definition \eqref{equation:RK_HJ} relies on a map
\[
\psi^{\Delta t \cdot \alpha^*dH} \colon
\begin{array}{ccc}
M & \to & \GG \\
x & \mapsto & \psi^{\Delta t \cdot \alpha^*dH}_x \colon =\Delta t \cdot \sum_{i=1}^s b_i \cdot  \bigl(d_x Z_i^*\alpha^* H\bigr)
\end{array}
\]
The coefficients are traditionally represented by their associated Butcher tableaux, with the notation $c_i=\sum_{j=1}^s a_{ij}$:
\[
\begin{array}{c|c}
    c & A \\
    \hline
     & b
  \end{array}
\]
Similarly to the order theory of Runge-Kutta methods for ODEs, the Taylor expansion of $\psi^{t\xi}(\zeta_0)$ in \eqref{equation:RK_HJ} writes as a Butcher series
\[
\psi^{t\xi}= B^{t\xi}\bigl(a\bigr)(0),
\]
with the same coefficient map $a \in \TT^*$ than for standard Runge-Kutta methods. We refer to \cite[Chap.\ts III]{Hairer06gni} for the exact expression of $a$ and an example is
\[
a(\forest{b_i[b_j_180,b_k[b_l_180,b_m]})=\sum_{i,j,k,l,m=1}^s b_i a_{ij} a_{ik} a_{kl} a_{km}=\sum_{i,k=1}^s b_i c_i a_{ik} c_k^2.
\]
The algebraic reformulation of Section \ref{sec:pre_lie} allows us to take over the classical order theory of Runge-Kutta methods for ODEs \cite{Hairer06gni} and to adapt it in the context of approximations of formal solutions to Hamilton-Jacobi equation.
\begin{theorem}
Let us consider a RKHP method \eqref{equation:RK_HJ} with coefficient map $a \in \TT^*$ and set $e \in \TT^*$ as in the equation \eqref{eq:e}. Then, if $a(\tau)=e(\tau)$ for all $\tau\in \TT^N$, the method has order $N$ for solving \eqref{eq:Ham_eq}.
Moreover, under the degeneracy condition \eqref{eq:degen}, if $a(\tau)=e(\tau)$ for all $\tau\in \TT_0^N$, the method has order $N$ for solving \eqref{eq:Ham_eq}.
\end{theorem}

The order conditions for the first orders can be found in \cite[Chap.\ts III]{Hairer06gni} in the general case and in Table \ref{table:order_conditions} in the degenerate case \eqref{eq:degen}.
\begin{longtable}{C|C|C}
\text{Order} &\text{Butcher tree } \tau  &\text{Order condition } a(\tau)=e(\tau)\\\hline
1 & \forest{b} & \sum_{i=1}^s b_i=1\\
\hline
3 & \forest{b[b,b]} & \sum_{i=1}^s b_i c_i^2 = \frac{1}{3}\\
\hline
4 & \forest{b[b[b,b]]} & \sum_{i,j=1}^s b_i a_{ij} c_j^2 = \frac{1}{12}\\
 & \forest{b[b,b,b]} & \sum_{i=1}^s b_i c_i^3 = \frac{1}{4}\\
 \hline
5 & \forest{b[b[b[b,b]]]} & \sum_{i,j,k=1}^s b_i a_{ij} a_{jk}c_k^2=\frac{1}{60}\\
 & \forest{b[b[b,b,b]]} & \sum_{i,j=1}^s b_i a_{ij} c_j^3=\frac{1}{20}\\
 & \forest{b[b,b[b,b]]} & \sum_{i,j=1}^s b_i c_i a_{ij} c_j^2=\frac{1}{15}\\
 & \forest{b[b,b,b,b]} & \sum_{i=1}^s b_i c_i^4=\frac{1}{5}\\
\caption{Order conditions of RKHP integrators under the degeneracy condition \eqref{eq:degen}.}
\label{table:order_conditions}
\end{longtable}

\begin{remark}
The idea of approximating birealisations at high order has been explored by \cite{cabrera2024}, where a procedure to asymptotically compute the source map $\alpha \colon \GG \to M$ is investigated. This relies on a long history research of deformation theory in order to construct the local symplectic groupoid of any Poisson manifold \cite{Cattaneo2005, cabrera2020}.  We expect a relation between the order of the approximation of the birealisation and the one of the dynamics to provide robust numerical methods.
\end{remark}

In the case of a general birealisation, a collection of explicit RKHP integrators can be derived straightforwardly from the standard Runge-Kutta integrators from \cite{Hairer06gni}.
In the case of a birealisation satisfying \eqref{eq:degen}, we propose a collection of new methods with minimal number of stages for fixed order. %The superconvergence property of Theorem \ref{thm:superconvergence} allows us to reach high order $p$ with less than $p$ evaluations.
We emphasize that there is no need to consider discretisations associated to symplectic methods (like the midpoint method) as the geometry has been taken care of by the birealisations. 
%\OC{\begin{itemize}
%\item Dans les tableaux à droite des méthodes numériques, est-ce qu'on a vraiment besoin de la colonne de gauche ici ? C'est la somme sur $j$ des $(a_{i,j})$ c'est ça ?
%\item s'il y a des biréalisations pour lesquelles l'ordre 2 de la fonction génératrice n'est pas nulle, je propose de mettre l'ordre $2$ finalement ? Ce qui permettrait en plus, si on veut, de renvoyer vers mes simulations numériques sur le solide rigide \cite{Oscar2022}.
%\end{itemize}
%}

\paragraph*{Euler method:}
The Euler method is a first order RKHP method (resp.\ts second order under \eqref{eq:degen}).
\[
 \begin{array}{rl}
y_n &= \alpha(\Delta t \cdot d_{x_n} H)\\
y_{n+1} &= \beta(\Delta t \cdot d_{x_n} H) 
  \end{array}
\quad\quad
  \begin{array}{c|c}
    0 & 0 \\
    \hline
     & 1
  \end{array}
\]

\paragraph*{Third order method:}
Under \eqref{eq:degen}, third order can be achieved with two stages.
\[
 \begin{array}{rl}
 Z&=\frac{1}{\sqrt{3}}\Delta t \cdot d H \\
y_n &= \alpha(\Delta t \cdot d_{x_n} Z^*\alpha^* H)\\
y_{n+1} &= \beta(\Delta t \cdot d_{x_n} Z^*\alpha^* H) 
  \end{array}
\quad
\quad
\begin{array}{c|cc}
    0 & 0 & 0 \\
    \frac{1}{\sqrt{3}} & \frac{1}{\sqrt{3}} & 0 \\
    \hline
     & 0 & 1
\end{array}
\]

\paragraph*{Fourth order method:}
Under \eqref{eq:degen}, fourth order can be achieved with three stages, that is one less stage than for the popular RK4 discretisation.
\[
\begin{array}{rl}
Z_1&=-\frac{\sqrt{3}}{4}\Delta t \cdot d H \\
Z_2&=\frac{3}{4}\Delta t \cdot d Z_1^*\alpha^* H \\
y_n &= \alpha\left(\Delta t ( \frac{11}{27} d_{x_n} H + \frac{16}{27} d_{x_n} Z_2^*\alpha^*H)\right)\\
y_{n+1} &= \beta\left(\Delta t ( \frac{11}{27} d_{x_n} H + \frac{16}{27} d_{x_n} Z_2^*\alpha^*H)\right)
  \end{array}
\quad \quad
  \begin{array}{c|ccc}
    0 & 0 & 0 & 0 \\
    -\frac{\sqrt{3}}{4} & -\frac{\sqrt{3}}{4} & 0 & 0 \\
    \frac{3}{4} & 0 & \frac{3}{4} & 0 \\
    \hline
     & \frac{11}{27} & 0 & \frac{16}{27}
  \end{array}
 \]

\begin{remark}
If implicit implementations are computationally feasible and \eqref{eq:degen} is satisfied, a one-stage third order implicit method is
\[
\begin{array}{rl}
Z&=\frac{1}{\sqrt{3}}\Delta t \cdot d Z^*\alpha^* H\\
y_n &= \alpha(\Delta t \cdot  d_{x_n} Z^*\alpha^* H)\\
y_{n+1} &= \beta(\Delta t \cdot  d_{x_n} Z^*\alpha^* H) 
  \end{array}
  \quad\quad
  \begin{array}{c|c}
    \frac{1}{\sqrt{3}} & \frac{1}{\sqrt{3}} \\
    \hline
     & 1
  \end{array}   
\]
\end{remark}

\begin{remark}
The Butcher-Connes-Kreimer Hopf algebra used in Section \ref{sec:comp} is also relevant to approximations of bisections in $\GG$. Since they admit a group law, a computational consequence is the existence of composition methods.
For instance, let $\xi \in \Omega^1(\GG),$ $\zeta_0 \in \Omega^{1, \GG}(M)$ and let the first order approximation of the Farmer series, analogous to the Euler method, be given by
\[
\psi^{t\xi}(\zeta_0):=\zeta_0+t\zeta_0^* \xi= \zeta_0 + B^{t\xi}\bigl(\delta_{\forest{b}}\bigr)(\zeta_0).
\]
Then the explicit midpoint method is the composition
\begin{align*}
\hat{\psi}^{t\xi}(\zeta_0)&:= (\psi^{t\xi}\circ \psi^{t\xi/2})(\zeta_0)\\
&=\zeta_0+t \xi\Bigl(\zeta_0+\frac{t}{2} \cdot \zeta_0^*\xi \Bigr)\\
&=\zeta_0+B^{t\xi}\Bigl(\delta_{\forest{b}}* (\delta_{\forest{b}}/2)\Bigr)(\zeta_0)\\
&=\zeta_0 + t \zeta_0^*\xi 
+ \frac{t^2}{2}\zeta_0^*[ \xi, \pi^* \zeta_0^* \xi ]
+ \frac{t^3}{8}\zeta_0^*[[ \xi, \pi^* \zeta_0^* \xi ], \pi^* \zeta_0^* \xi]
%+ \frac{t^4}{48}\zeta_0^*[[[ \xi, \pi^* \zeta_0^* \xi ], \pi^* \zeta_0^* \xi], \pi^* \zeta_0^* \xi]
+\dots
\end{align*}
We observe in particular that $\hat{\psi}$ provides a second order approximation for a general $\xi \in \Omega^1(\GG)$, similarly to the context of ODEs.
\end{remark}

\section{Conclusion}

The algebraic tools of geometric integration extend for the study of Poisson geometry. In the framework of symplectic groupoids, they bring new insights, from both geometric and computational viewpoints.
Some possible extensions of the present work could include the creation of a stability analysis in the context of Hamiltonian Poisson integrators. For instance, steep dynamical systems of conservative mechanics may benefit from the formalism we introduced. We also plan to implement implicit methods and benchmark their orders in order to provide numerical illustrations of our algebraic results. This will deserve a more mechanics oriented article.
On the algebraic side, we expect this work to open many perspectives. A natural extension could adapt the Hopf algebra of substitution for the backward error analysis in this context. At last, a more general geometric context could yield post-Lie algebras.
This is matter for future work.

%--------------------------------------------------------------
\bigskip

\noindent \textbf{Acknowledgements.}
The authors would like to thank Anton Fehnker, Camille Laurent-Gengoux, Jean-David Jacques, Chenchang Zhu, the AGATA seminar of Montpellier and the Oberseminar on algebraic topology of Göttingen for helpful discussions.
The authors acknowledge the support of the French program ANR-11-LABX-0020-0 (Labex Lebesgue) and the Deutsche Forschungsgemeinschaft RTG 2491.

\bibliographystyle{abbrv}
\bibliography{Ma_Bibliographie, Oscar_Bibliographie}

\begin{thebibliography}{10}

\bibitem{Anderson89tvb}
I.~M. Anderson.
\newblock The variational bicomplex.
\newblock Unpublished, 1989.

\bibitem{Bronasco25hoi}
E.~Bronasco, A.~Busnot~Laurent, and B.~Huguet.
\newblock High order integration of stochastic dynamics on {R}iemannian
  manifolds with frozen-flow methods.
\newblock {\em arXiv:2503.21855}, 2025.

\bibitem{Bronasco22cef}
E.~Bronasco and A.~Laurent.
\newblock {H}opf algebra structures for the backward error analysis of ergodic
  stochastic differential equations.
\newblock {\em arXiv:2407.07451}, 2024.

\bibitem{Burrage96hso}
K.~Burrage and P.~M. Burrage.
\newblock High strong order explicit {R}unge-{K}utta methods for stochastic
  ordinary differential equations.
\newblock {\em Appl. Numer. Math.}, 22(1-3):81--101, 1996.
\newblock Special issue celebrating the centenary of Runge-Kutta methods.

\bibitem{Busnot25osr}
A.~Busnot~Laurent, K.~Debrabant, and A.~Kv{\ae}rn\o.
\newblock Optimal stochastic {R}unge-{K}utta methods for the weak integration
  of stochastic dynamics.
\newblock {\em Preprint}, 2025.

\bibitem{Butcher72aat}
J.~C. Butcher.
\newblock An algebraic theory of integration methods.
\newblock {\em Math. Comp.}, 26:79--106, 1972.

\bibitem{Butcher21bsa}
J.~C. Butcher.
\newblock {\em B-series: algebraic analysis of numerical methods}.
\newblock Springer, 2021.

\bibitem{cabrera2020}
A.~Cabrera.
\newblock Generating functions for local symplectic groupoids and
  non-perturbative semiclassical quantization.
\newblock {\em Communications in Mathematical Physics}, 2022.

\bibitem{cabrera2024}
A.~Cabrera, D.~M. de~Diego, and M.~Vaquero.
\newblock Approximating symplectic realizations: A general framework for the
  construction of {P}oisson integrators, 2024.

\bibitem{dherin2016}
A.~Cabrera and B.~Dherin.
\newblock Formal symplectic realizations.
\newblock {\em International Mathematical Research Notices}, 7:1925--1950,
  2016.

\bibitem{salazar2020}
A.~Cabrera, I.~Mărcuţ, and M.~A. Salazar.
\newblock On local integration of lie brackets.
\newblock {\em Journal für die reine und angewandte Mathematik (Crelles
  Journal)}, 2020(760):267--293, 2020.

\bibitem{dSW}
A.~Cannas~da Silva and A.~Weinstein.
\newblock {\em Geometric models for noncommutative algebras}, volume~10 of {\em
  Berkeley Mathematics Lecture Notes}.
\newblock American Mathematical Society, Providence, RI; Berkeley Center for
  Pure and Applied Mathematics, Berkeley, CA, 1999.

\bibitem{Cattaneo2005}
A.~S. Cattaneo, B.~Dherin, and G.~Felder.
\newblock Formal symplectic groupoid.
\newblock {\em Communications in Mathematical Physics}, 253:645--674, 2005.

\bibitem{Cattaneo2010}
A.~S. Cattaneo, B.~Dherin, and G.~Felder.
\newblock Formal {L}agrangian operad.
\newblock {\em International Journal of Mathematics and Mathematical Sciences},
  2010(1):643605, 2010.

\bibitem{Chartier10aso}
P.~Chartier, E.~Hairer, and G.~Vilmart.
\newblock Algebraic structures of {B}-series.
\newblock {\em Found. Comput. Math.}, 10(4):407--427, 2010.

\bibitem{Connes98har}
A.~Connes and D.~Kreimer.
\newblock Hopf algebras, renormalization and noncommutative geometry.
\newblock {\em Comm. Math. Phys.}, 199(1):203--242, 1998.

\bibitem{oscar}
O.~Cosserat.
\newblock {Symplectic groupoids for {P}oisson integrators}.
\newblock {\em Journal of Geometry and Physics}, 2023.

\bibitem{Cosserat23tac}
O.~Cosserat.
\newblock {\em Theory and construction of structure preserving integrators in
  Poisson geometry}.
\newblock PhD thesis, La Rochelle Universit{\'e}, 2023.

\bibitem{Oscar2022}
O.~Cosserat, C.~Laurent-Gengoux, and V.~Salnikov.
\newblock Numerical methods in {P}oisson geometry and their application to
  mechanics.
\newblock {\em Mathematics and Mechanics of Solids}, 2024.

\bibitem{Weinstein1987}
A.~Coste, P.~Dazord, and A.~Weinstein.
\newblock Groupo{\"\i}des symplectiques.
\newblock {\em Publications du D{\'e}partement de Math{\'e}matiques de Lyon},
  1987.

\bibitem{Crainic2021}
M.~Crainic, R.~Loja~Fernandes, and I.~M{\u a}rcu{\c t}.
\newblock Lectures on {P}oisson geometry.
\newblock {\em American Mathematical Society}, 2021.

\bibitem{Zung2005}
J.-P. Dufour and N.~Tien~Zung.
\newblock {\em Poisson Structures and their Normal Forms}.
\newblock Birkh{\"a}user Verlag, 2005.

\bibitem{Ebrahimi15otl}
K.~Ebrahimi-Fard, A.~Lundervold, and H.~Z. Munthe-Kaas.
\newblock On the {L}ie enveloping algebra of a post-{L}ie algebra.
\newblock {\em J. Lie Theory}, 25(4):1139--1165, 2015.

\bibitem{deLeon2017}
S.~Ferraro, M.~de~Le{\'o}n, J.~C. Marrero, D.~M. de~Diego, and M.~Vaquero.
\newblock On the geometry of the {H}amilton--{J}acobi equation and generating
  functions.
\newblock {\em Archive for Rational Mechanics and Analysis}, 2017.

\bibitem{Foissyith}
L.~Foissy.
\newblock Introduction to {H}opf algebra of rooted trees.
\newblock {\em Preprint}.

\bibitem{Ge1990}
Z.~Ge.
\newblock Generating functions, {H}amilton-{J}acobi equations and symplectic
  groupoids on {P}oisson manifolds.
\newblock {\em Indiana University Mathematics Journal}, 1990.

\bibitem{Grong23pla}
E.~Grong, H.~Z. Munthe-Kaas, and J.~Stava.
\newblock Post-{L}ie algebra structure of manifolds with constant curvature and
  torsion.
\newblock {\em arXiv preprint arXiv:2305.02688}, 2023.

\bibitem{Gubinelli10ror}
M.~Gubinelli.
\newblock Ramification of rough paths.
\newblock {\em J. Differential Equations}, 248(4):693--721, 2010.

\bibitem{Hairer06gni}
E.~Hairer, C.~Lubich, and G.~Wanner.
\newblock {\em Geometric numerical integration}, volume~31 of {\em Springer
  Series in Computational Mathematics}.
\newblock Springer-Verlag, Berlin, second edition, 2006.
\newblock Structure-preserving algorithms for ordinary differential equations.

\bibitem{Hairer74otb}
E.~Hairer and G.~Wanner.
\newblock On the {B}utcher group and general multi-value methods.
\newblock {\em Computing (Arch. Elektron. Rechnen)}, 13(1):1--15, 1974.

\bibitem{Hairer15gvn}
M.~Hairer and D.~Kelly.
\newblock Geometric versus non-geometric rough paths.
\newblock {\em Ann. Inst. Henri Poincar\'{e} Probab. Stat.}, 51(1):207--251,
  2015.

\bibitem{hawkins2008}
E.~Hawkins.
\newblock {A groupoid approach to quantization}.
\newblock {\em Journal of Symplectic Geometry}, 6(1):61 -- 125, 2008.

\bibitem{Iserles00lgm}
A.~Iserles, H.~Z. Munthe-Kaas, S.~P. N{\o}rsett, and A.~Zanna.
\newblock Lie-group methods.
\newblock In {\em Acta numerica, 2000}, volume~9 of {\em Acta Numer.}, pages
  215--365. Cambridge Univ. Press, Cambridge, 2000.

\bibitem{jay1996}
L.~Jay.
\newblock Symplectic partitioned {R}unge-{K}utta methods for constrained
  {H}amiltonian systems.
\newblock {\em SIAM Journal on Numerical Analysis}, 33(1):368--387, 1996.

\bibitem{karabegov2005}
A.~V. Karabegov.
\newblock Formal symplectic groupoid of a deformation quantization.
\newblock {\em Communications in Mathematical Physics}, 258:223--256, 2005.

\bibitem{Karasev1987}
M.~V. Karaseff.
\newblock Analogues of the objects of {L}ie group theory for nonlinear
  {P}oisson brackets.
\newblock {\em Mathematics of the USSR-Izvestiya}, 1987.

\bibitem{Laurent20eab}
A.~Laurent and G.~Vilmart.
\newblock Exotic aromatic {B}-series for the study of long time integrators for
  a class of ergodic {SDE}s.
\newblock {\em Math. Comp.}, 89(321):169--202, 2020.

\bibitem{Laurent21ocf}
A.~Laurent and G.~Vilmart.
\newblock Order conditions for sampling the invariant measure of ergodic
  stochastic differential equations on manifolds.
\newblock {\em Found. Comput. Math.}, 22(3):649--695, 2022.

\bibitem{Lee13its}
J.~M. Lee.
\newblock {\em Introduction to smooth manifolds}, volume 218 of {\em Graduate
  Texts in Mathematics}.
\newblock Springer, New York, second edition, 2013.

\bibitem{Marle1987}
P.~Libermann and C.-M. Marle.
\newblock {\em Symplectic Geometry and Analytical Mechanics}.
\newblock Kluwer Academic Publishers, 1987.

\bibitem{Lundervold11hao}
A.~Lundervold and H.~Munthe-Kaas.
\newblock Hopf algebras of formal diffeomorphisms and numerical integration on
  manifolds.
\newblock In {\em Combinatorics and physics}, volume 539 of {\em Contemp.
  Math.}, pages 295--324. Amer. Math. Soc., Providence, RI, 2011.

\bibitem{McLachlan17bsa}
R.~I. McLachlan, K.~Modin, H.~Munthe-Kaas, and O.~Verdier.
\newblock Butcher series: a story of rooted trees and numerical methods for
  evolution equations.
\newblock {\em Asia Pac. Math. Newsl.}, 7(1):1--11, 2017.

\bibitem{Olver93aol}
P.~J. Olver.
\newblock {\em Applications of {L}ie groups to differential equations}, volume
  107 of {\em Graduate Texts in Mathematics}.
\newblock Springer-Verlag, New York, second edition, 1993.

\bibitem{Weinstein1971}
A.~Weinstein.
\newblock Symplectic manifolds and their {L}agrangian submanifolds.
\newblock {\em Advances in Mathematics}, 1971.

\bibitem{weinstein1991}
A.~Weinstein and P.~Xu.
\newblock Extensions of symplectic groupoids and quantization.
\newblock {\em Journal für die reine und angewandte Mathematik}, 417:159--190,
  1991.

\bibitem{zakrzewski1990}
S.~Zakrzewski.
\newblock Quantum and classical pseudogroups. part i. union pseudogroups and
  their quantization.
\newblock {\em Communications in Mathematical Physics}, 134(2):347--370, 1990.

\end{thebibliography}

\end{document}